\newtheorem{theorem}{Theorem}
\newtheorem{lemma}[theorem]{Lemma}
\newtheorem{definition}[theorem]{Definition}
\newtheorem{proposition}[theorem]{Proposition}
\newtheorem{corollary}[theorem]{Corollary}
\DeclareMathOperator\G{\mathcal{G}}
\DeclareMathOperator\C{\mathcal{C}}
\title{Nordhaus-Gaddum problem in term of $G$-free coloring
}
\author{Yaser Rowshan$^1$}
\keywords{Conditional Chromatic number, $G$-free coloring, the Nordhaus-Gaddum problem, $G$-free critical.}
\subjclass[2010]{05C15.}
\address{$^1$Y. Rowshan, 
	Department of Mathematics, Institute for Advanced Studies in Basic Sciences (IASBS), Zanjan 45137-66731, Iran}
\email{y.rowshan@iasbs.ac.ir}
\begin{document}
	\maketitle 
	
	\begin{abstract} 
		Let $H=(V(H),E(H))$ be a graph. A  $k$-coloring of $H$ is a mapping $\pi : V(H) \longrightarrow \{1,2,\ldots, k\}$, if  each color class induces a $K_2$-free subgraph. For a  graph $G$ of order at least $2$, a $G$-free $k$-coloring of $H$,
		is  a mapping $\pi : V(H) \longrightarrow \{1,2,\ldots,k\}$, so that 
		the   induced subgraph by each color class of $\pi$, contains no  copy of $G$. The $G$-free chromatic number of $H$, is the minimum number $k$, so that  it has a $G$-free $k$-coloring, and denoted by
		$\chi_G(H)$.  In this paper, we give some  bounds and attributes  on the $G$-free chromatic number of graphs, in terms of  the number of vertices, maximum degree,  minimum degree, and chromatic number. Our main results are  the Nordhaus-Gaddum-type theorem for the $\G$-free chromatic number of a graph.
	 
	\end{abstract}
	
	\section{Introduction} 
	Throughout this paper, all graphs are simple, undirected, and finite. For  given graph $G$, the vertex set, edge set, maximum degree, and minimum degree of $G$, denoted by $V(G)$, $E(G)$, $\Delta(G)$, and $\delta(G)$, respectively. The number  of vertices of 
	$G$ is denoted by $|V(G)|$.
	For a vertex $v\in V(G)$, let $\deg_G{(v)}$ ($\deg{(v)}$) and $N_G(v)$ denote the degree and neighbors of $v$ in $G$, respectively. 
	Let $W$ be any subset of  $V(G)$, the induced
	subgraph $G[W]$, is the graph whose vertex set is $W$ and whose edge set consists of all of the edges in $E(G)$ that have both endpoints in $W$.
	The join of two graphs $G$ and $H$, denoted by $G+H$, is a graph obtained from $G$ and $H$ by joining each vertex of $G$ to all vertices of $H$.

	\subsection{$G$-free coloring.}	\hfilneg
	
	The conditional chromatic number $\chi(H,P)$ of $H$,  is the smallest  integer  $k$, such that there 
	exists a decomposition of  $V(H)$  into  sets $V_1,\ldots, V_k$, so that  $H[V_i]$ satisfies the property $P$, where   $P$ is a graphical property, and $H[V_i]$ is the induced subgraph on $V_i$, for each $1\leq i\leq k$.
	This extension of graph coloring was presented by Harary in 1985~\cite{MR778402}. Suppose that $\G$
	be a family of  graphs, when $P$ is the feature that a subgraph induced by each color class does not contain  a  copy of 
	each member of $\G$, we write $\chi_{\G}(H)$ instead of $\chi(H, P)$. In this regard, we say a graph $H$ has a $\G$-free $k$-coloring, if there exists a  map $\pi : V(H) \longrightarrow \{1,2,\ldots,k\}$, such that  each color class  $V_i=\pi^{-1}{(i)}$  does not contain any members of $\G$. For simplicity of notation if $\G=\{G\}$, then we write $\chi_G(H)$ instead of $\chi_{\G}(H)$.
	
	An ordinary  $k$-coloring of $H$  can be viewed as $\G$-free $k$-coloring of a graph $H$ by taking $\G=\{K_2\}$.  It has been shown that for each  graph $H$, $\chi(H)\leq  \Delta(H)+1$. The well-known Brooks theorem, states that   for any  connected graph $H$,  if $H$  is neither an odd $C_n$ nor a $K_n$, then $\chi(H)\leq  \Delta(H)$ \cite{Brooks}.

	The Nordhaus-Gaddum problem \cite{nordhaus1956complementary}, associates with the parameter $f(G)$ of a graph $H$, and asks to find sharp bound for $f(H) + f(\overline{H})$ and $f(H)f(\overline{H})$. Many authors have studied the Nordhaus-Gaddum problem, associate with the parameter $f(H)$.
	Nordhaus and Gaddum in \cite{nordhaus1956complementary} investigate the   $\chi(H)$ and  $\chi(\overline{H})$ together, and they proved that if $H$ be a graph with $n$ members, then:
\begin{itemize}
\item 	$ 2\sqrt{|V(H)|}\leq \chi(H)+\chi(\overline{H})\leq |V(H)|+1.$
\end{itemize}
After proving this theorem, inequalities containing the sum or product of a graph and its complement are  called Nordhaus-Gaddum-type inequalities.
The Nordhaus-Gaddum problem associated with the parameter $\chi_n(H)$ (The n-defective chromatic number), is to
find sharp bounds for  $\chi_n(H)+\chi_n(\overline{H})$. Maddox \cite{maddox1989vertex} investigated this problem and showed that if $H$ be a $K_3$-free graph,  then:
\begin{itemize}
	\item $\chi_n(H)+\chi_n(\overline{H})\leq 5\lceil \frac{|V(H)|}{3n+3}\rceil.$
\end{itemize}
The domination number of a graph $H$, denoted by $\gamma(H)$, is the cardinality of the minimum dominating set. The general Nordhaus-Gaddum upper bounds  for $\gamma(H)$ is given in \cite{jaeger1972relations} and it has proven that if  $H$ be a graph  whit $n$ members, then:
\begin{itemize}
	\item $3\leq \gamma(H)+\gamma(\overline{H})\leq |V(H)|+1.$
\end{itemize}

One can refer to\cite{aouchiche2013survey} and \cite{brown2009nordhaus, mitchem1971point}, and their references for further studies about the Nordhaus-Gaddum-type theorem.	
	The vertex  arboricity of graph $H$, denoted by $a(H)$, and defined as the minimum number of colors which are  needed to color the vertices of $H$, so that no cycle is monochromatic.	The general Nordhaus-Gaddum upper bounds for  vertex arboricity is  given as follow:
	\begin{theorem}\label{tho1}\cite{mitchem1971point}Let $H$ is a graph, then:
		\[ \lceil \sqrt{|V(H)|} \rceil \leq a(H)+a(\overline{H})\leq \frac{|V(H)|+3}{2}.\]
	\end{theorem}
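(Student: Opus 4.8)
The plan is to prove the two inequalities separately, each through a short combinatorial lemma; throughout write $n=|V(H)|$.

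For the lower bound I would exploit the complementary forest structure. Fix optimal vertex-arboricity colourings, say of $H$ into induced forests $A_1,\dots,A_{a(H)}$ and of $\overline H$ into induced forests $B_1,\dots,B_{a(\overline H)}$, and refine them into the common partition $\{A_i\cap B_j\}$. The key observation is that any vertex set $S$ inducing a forest simultaneously in $H$ and in $\overline H$ satisfies $|S|\le 4$: writing $m=|S|$, the edges of $H[S]$ and of $\overline H[S]$ partition the $\binom{m}{2}$ pairs, while each induced forest has at most $m-1$ edges, so $\binom{m}{2}\le 2(m-1)$, which forces $m\le 4$. Since every block $A_i\cap B_j$ induces a forest in both graphs, each has at most $4$ vertices, whence $n\le 4\,a(H)\,a(\overline H)$. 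Applying AM--GM gives $a(H)+a(\overline H)\ge 2\sqrt{a(H)a(\overline H)}\ge \sqrt{n}$, and since the left-hand side is an integer the ceiling may be inserted, yielding $\lceil\sqrt{n}\rceil\le a(H)+a(\overline H)$.

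For the upper bound I would route through the degeneracy $d(\cdot)$, the maximum over all subgraphs of the minimum degree. The first ingredient is the greedy estimate $a(G)\le 1+\lfloor d(G)/2\rfloor$: process the vertices in a degeneracy order, in which each vertex has at most $d(G)$ earlier neighbours, while maintaining $k=1+\lfloor d(G)/2\rfloor$ induced forests; since $2k>d(G)$, a vertex cannot have two already-coloured neighbours in every class, so some class receives it with at most one neighbour and stays acyclic. The second ingredient is a Nordhaus--Gaddum bound for degeneracy itself, $d(H)+d(\overline H)\le n-1$. I would prove it by taking subgraphs $H[S]$ and $\overline H[T]$ witnessing the two degeneracies, so that every vertex of $S$ has at least $d(H)$ neighbours inside $S$ and symmetrically for $T$. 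If $S\cap T\ne\emptyset$, a common vertex $v$ has at least $d(H)$ neighbours and at least $d(\overline H)$ non-neighbours, forming disjoint sets that avoid $v$, so $d(H)+d(\overline H)\le n-1$; if instead $S\cap T=\emptyset$, then $|S|\ge d(H)+1$ and $|T|\ge d(\overline H)+1$ with $|S|+|T|\le n$ already force the same conclusion. Combining the two ingredients, $a(H)+a(\overline H)\le 2+\frac{d(H)+d(\overline H)}{2}\le 2+\frac{n-1}{2}=\frac{n+3}{2}$.

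The routine parts are the edge-count $m\le 4$ and the AM--GM step. The most delicate point is landing the upper bound exactly on $(n+3)/2$: this needs both auxiliary estimates to be essentially tight, so the floor in $a(G)\le 1+\lfloor d(G)/2\rfloor$ and the case split in the degeneracy lemma must be tracked precisely rather than bounded crudely. As a sanity check, $H=K_n$ with $n$ odd gives $a(H)=\lceil n/2\rceil$ and $a(\overline H)=1$, meeting the upper bound, which confirms that the constants are sharp.
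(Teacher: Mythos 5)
Your proof is correct, but it is genuinely different from what the paper does: the paper never proves this statement itself --- it cites it from Mitchem's 1971 paper --- and only at the very end recovers the \emph{upper} half of it as the special case $\G=\C$ of Theorem~\ref{one-thm1}, whose relevant ingredient (Theorem~\ref{vthe3}) is proved by induction on $|V(H)|$, deleting a pair of equal-degree vertices (for $K_3$) or a triple of vertices (for $C_m$, $m\ge 4$), together with critical-subgraph arguments; the lower bound $\lceil\sqrt{n}\rceil$ is not addressed anywhere in the paper. Your argument is self-contained and non-inductive. For the lower bound you take the common refinement $\{A_i\cap B_j\}$ of the two optimal forest partitions and cap each block at four vertices via the edge count $\binom{m}{2}\le 2(m-1)$, so $n\le 4\,a(H)\,a(\overline{H})$, and AM--GM finishes; this is sound, and the bound of four is attained by the self-complementary $P_4$, so no slack is wasted. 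For the upper bound you route through degeneracy, combining the greedy estimate $a(G)\le 1+\lfloor d(G)/2\rfloor$ (correct: with $k=1+\lfloor d/2\rfloor$ classes one has $2k>d$, so some class sees at most one earlier neighbour) with the Nordhaus--Gaddum inequality $d(H)+d(\overline{H})\le n-1$, whose two-case proof (witness sets intersecting or disjoint) is also correct; the floors then give $a(H)+a(\overline{H})\le 2+\tfrac{n-1}{2}=\tfrac{n+3}{2}$, and your $K_n$ check ($n$ odd) confirms sharpness. What your route buys: it is elementary, it actually proves the lower bound that the paper leaves entirely to the citation, and the degeneracy inequality $d(H)+d(\overline{H})\le n-1$ is a reusable lemma of independent interest. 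What the paper's route buys: its inductive, critical-subgraph machinery is built to handle $G$-free colorings for an arbitrary forbidden graph $G$ (and families $\G$), of which vertex arboricity is just the instance $\G=\C$, so the paper gets the upper bound here as a corollary of a much more general theorem rather than by a bespoke argument.
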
 
 In this article, we prove some results for $\chi_{\G}(H)$. Our main results are  the Nordhaus-Gaddum-type theorem for the $\G$-free chromatic number of a graph as follows.
	\begin{theorem}\label{one-thm1}
		Suppose that $\G$
		is a family of  graphs with  minimum degrees  $\delta(\G)$, where $\delta(\G)=\min\{\delta(G):~G\in\G \}$. Also, assume that $H$ is a  graph with $n(H)$ vertices.  Then, we have:
		
		\[
		\chi_{\G}(H)+ \chi_{\G}(\overline{H})\leq\left\lbrace
		\begin{array}{ll}
			
			\lceil\frac{n(H)}{\delta(\G)}\rceil+1 & ~~~~
		when ~\G=\C ~or	~either,~~ H~ or~\overline{H}~is ~\G-free~ critical, ~\vspace{.2 cm}\\
			\lceil\frac{n(H)}{\delta(G)}\rceil+2 & ~~~~ ~~~~~~ otherwise.
		\end{array}
		\right.
		\]
		
	\end{theorem}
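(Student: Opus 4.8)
The plan is to isolate two structural facts about $\G$-free colourings and then combine them in a single counting step, avoiding induction on $n(H)$ (a per-vertex induction loses too much against the ceiling). Write $\delta=\delta(\G)$ and $n=n(H)$. The first fact I would establish (or invoke, if it appears among the earlier bounds) is a minimum-degree estimate for critical graphs: if $F$ is $\G$-free critical with $\chi_{\G}(F)=k$, then $\delta(F)\ge (k-1)\delta$. To prove it, fix $v\in V(F)$ and an optimal $\G$-free $(k-1)$-colouring of $F-v$ with classes $V_1,\dots,V_{k-1}$. Since $\chi_{\G}(F)=k$, the vertex $v$ cannot be placed in any $V_i$ without creating a copy of some $G\in\G$; as $V_i$ is $\G$-free, that copy must use $v$, so $v$ has at least $\delta(G)\ge\delta$ neighbours inside $V_i$, and summing over the $k-1$ classes gives $\deg_F(v)\ge (k-1)\delta$. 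The second fact is the maximum-degree bound $\chi_{\G}(F)\le \lceil (\Delta(F)+1)/\delta\rceil$, which follows from Lov\'asz's defective-colouring theorem: one partitions $V(F)$ into $\lceil(\Delta(F)+1)/\delta\rceil$ parts of induced maximum degree at most $\delta-1$, and each such part is $\G$-free because a copy of any $G\in\G$ would need a vertex of degree at least $\delta(G)\ge\delta$ in its part.

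For the main inequality, let $k_1=\chi_{\G}(H)$ and let $H'=H[W]$ be a $\G$-free critical induced subgraph with $\chi_{\G}(H')=k_1$, obtained by deleting vertices that do not lower the $\G$-free chromatic number; set $n'=|W|$. By the first fact $\delta(H')\ge(k_1-1)\delta$, so every vertex of $W$ has degree at most $n'-1-(k_1-1)\delta$ in $\overline{H'}:=\overline{H}[W]$. Applying the second fact to $\overline{H'}$ and using that $(k_1-1)\delta$ is a multiple of $\delta$ gives
\[
\chi_{\G}\bigl(\overline{H}[W]\bigr)\le \Bigl\lceil \tfrac{n'-(k_1-1)\delta}{\delta}\Bigr\rceil=\Bigl\lceil \tfrac{n'}{\delta}\Bigr\rceil-(k_1-1).
\]
I then colour $\overline{H}$ by keeping these colours on $W$ (each class on $W$ has the same induced subgraph in $\overline{H}$, hence stays $\G$-free) and using fresh colours on the $n-n'$ vertices of $V(H)\setminus W$ in blocks of size at most $\delta$, each of which is $\G$-free. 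This yields $\chi_{\G}(\overline{H})\le \lceil n'/\delta\rceil-(k_1-1)+\lceil (n-n')/\delta\rceil$, and adding $k_1$ together with $\lceil a\rceil+\lceil b\rceil\le\lceil a+b\rceil+1$ gives
\[
\chi_{\G}(H)+\chi_{\G}(\overline{H})\le \Bigl\lceil \tfrac{n'}{\delta}\Bigr\rceil+\Bigl\lceil \tfrac{n-n'}{\delta}\Bigr\rceil+1\le \Bigl\lceil \tfrac{n}{\delta}\Bigr\rceil+2,
\]
which is the ``otherwise'' bound.

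The sharpening to $+1$ in the critical case is then immediate: if $H$ itself is $\G$-free critical then $W=V(H)$, so $n'=n$ and the term $\lceil(n-n')/\delta\rceil=0$ disappears, leaving $\lceil n/\delta\rceil+1$; the case where $\overline{H}$ is critical follows by exchanging the roles of $H$ and $\overline{H}$. For the remaining special case $\G=\C$ I would note that $\chi_{\C}=a$ is the vertex arboricity and $\delta(\C)=2$, so $\lceil n/\delta(\C)\rceil+1=\lceil n/2\rceil+1=\lfloor (n+3)/2\rfloor$; since $a(H)+a(\overline{H})$ is an integer, the desired bound is exactly what Theorem~\ref{tho1} yields for $H$.

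I expect the main obstacle to be the minimum-degree lemma for critical graphs, since that is where $\delta(\G)$ enters and one must argue carefully that the copy of $G$ forced by the failure to recolour $v$ genuinely passes through $v$ and therefore pins down at least $\delta(G)\ge\delta$ of its neighbours in each class. The secondary pitfall is arithmetic rather than conceptual: keeping the ceiling manipulations exact, in particular splitting $\lceil (n'-(k_1-1)\delta)/\delta\rceil$ without loss via divisibility by $\delta$, and invoking $\lceil a\rceil+\lceil b\rceil\le\lceil a+b\rceil+1$ exactly once, so that the slack is precisely the single additive unit that distinguishes the two cases of the theorem.
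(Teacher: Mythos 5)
Your proposal is correct and proves the statement as written, but its architecture is genuinely different from the paper's, and leaner. The two core ingredients are shared: your first fact is the paper's Lemma~\ref{the2}, and your second fact is the form of Corollary~\ref{col2} with $\delta(\G)$ in the denominator, which is also what the paper actually invokes inside Claim 2 of the proof of Theorem~\ref{vthe1}; passing to a critical subgraph and colouring the leftover vertices in small blocks is the idea of the paper's Lemma~\ref{le3}. What differs is the assembly. The paper proceeds by induction on $n(H)$ and distinguishes whether $G\cong K_{\delta+1}$ and whether $\delta \mid n$, which forces the clique-intersection analysis (Claim 3 in Theorem~\ref{vthe1}, and Theorem~\ref{vthe2}) plus a separate induction for cycles (Theorem~\ref{vthe3}). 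You avoid the induction and all of these case distinctions: taking the leftover blocks of size $\delta$ rather than $\delta+1$ makes them $\G$-free for \emph{every} family with $\delta(\G)=\delta$ (each member has at least $\delta+1$ vertices), not only when $G\not\cong K_{\delta+1}$, and a single application of $\lceil a\rceil+\lceil b\rceil\leq\lceil a+b\rceil+1$ then yields the ``$+2$'' bound; the ``$+1$'' bound in the critical case is just the degenerate instance $W=V(H)$, where the extra unit never appears. Your route also buys something the paper's does not: you run both facts directly for the family $\G$ (using only $\delta(G)\geq\delta(\G)$ for each member), whereas the paper proves its bounds for a single $G$ and lifts them to $\G$ via the inequality $\chi_{\G}(H)\leq\max_{G\in\G}\chi_{G}(H)$, whose trivial direction is in fact the opposite one ($\chi_{\G}\geq\max_{G\in\G}\chi_{G}$, since avoiding all members of $\G$ is harder than avoiding one); your argument never needs such a reduction, which is a real repair. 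The one point where you deliver less than the paper is $\G=\C$: you obtain the ``$+1$'' bound by citing Mitchem's Theorem~\ref{tho1} together with the integrality identity $\lfloor (n+3)/2\rfloor=\lceil n/2\rceil+1$. That is logically sound, since Theorem~\ref{tho1} is prior published work quoted in the introduction, but it turns that case into an appeal to the literature rather than a new proof, while the paper gives its own self-contained inductive argument in Theorem~\ref{vthe3}; if self-containment is wanted, that is the one case you would still need to argue from scratch.
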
 
	
	In Theorem~\ref{one-thm1}, if we take 
	$\G=\{K_2\}$, then we get  Nordhaus-Gaddum's result. Also,  if we take 
	$\G=\C$,  then we get Theorem~\ref{tho1}.

	\section{Main results}

To prove Theorem \ref{one-thm1},  we need  some theorems and lemmas. In this section,  we give some properties of  $G$-free coloring, and  some upper  bounds on the $G$-free chromatic number of graphs.  
\subsection{Some properties  and some bounds of  $G$-free coloring of graphs.}\hfilneg
	
	$k$-critical graphs have a key role in studying ordinary graph coloring. Here we define the $G$-free $k$-critical graphs.
	\begin{definition}
		A graph $H$ with $\chi_G(H)=k$ is said  $G$-free $k$-critical,   if for each proper subgraph  $H'$ of   $H$, $\chi_G(H')\leq k-1$. 
	\end{definition}
	Let $H$ be a graph with $\chi_G(H)=k$. By taking a minimal subgraph of $H$ with $G$-free chromatic number $k$,
	it is easy to say that this subgraph is   $G$-free $k$-critical.
	It is well known  that if $H$ be a $k$-critical graph with $\chi(H)=k$, then $\delta(H)\geq \chi(H)-1$. As a generalization, we have the following result:
	
	\begin{lemma}\label{the2} If $H$ is a $G$-free $k$-critical graph, then: 
		\[\delta(H)\geq \delta(G) (k-1).\]
	\end{lemma}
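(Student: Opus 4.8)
The plan is to mimic the classical argument that an (ordinary) $k$-critical graph has minimum degree at least $k-1$, replacing the ``one forbidden color per neighbor'' bookkeeping by a ``$\delta(G)$ forbidden neighbors per color class'' count. Fix an arbitrary vertex $v\in V(H)$ and delete it. Since $H$ is $G$-free $k$-critical and $H-v$ is a proper subgraph, we have $\chi_G(H-v)\leq k-1$, so I may fix a $G$-free $(k-1)$-coloring of $H-v$ with color classes $V_1,\ldots,V_{k-1}$.

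The key step is the following observation: for each index $i$, the induced subgraph $H[V_i\cup\{v\}]$ must contain a copy of $G$. Indeed, if for some $i$ it did \emph{not}, then assigning $v$ the color $i$ would extend the given coloring to a $G$-free $(k-1)$-coloring of all of $H$, contradicting $\chi_G(H)=k$. So every class ``rejects'' $v$ in the sense that adding $v$ creates a forbidden copy of $G$.

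Next I convert each such forbidden copy into a neighbor count. Because $V_i$ is a color class of a $G$-free coloring, the subgraph $H[V_i]$ is already $G$-free; hence any copy of $G$ sitting inside $H[V_i\cup\{v\}]$ is forced to use the vertex $v$. In that copy, $v$ plays the role of some vertex of $G$, whose degree inside $G$ is at least $\delta(G)$, and all of its neighbors in the copy lie in $V_i$. Therefore $v$ has at least $\delta(G)$ neighbors inside each class $V_i$. Summing over the $k-1$ pairwise disjoint classes yields $\deg_H(v)\geq \delta(G)(k-1)$, and since $v$ was arbitrary this gives $\delta(H)\geq \delta(G)(k-1)$.

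The argument is short, and the only point requiring care — the one I would flag as the ``obstacle'' — is the middle inference that the newly created copy of $G$ genuinely contains $v$. This is exactly where $G$-freeness of the individual classes is used: without it, the copy of $G$ could live entirely inside $V_i$ and would contribute nothing to $\deg(v)$. Once that is pinned down, the disjointness of the classes makes the final summation immediate, so no delicate estimates are needed.
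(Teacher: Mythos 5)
Your proof is correct and takes essentially the same approach as the paper: delete a vertex, invoke criticality to obtain a $G$-free $(k-1)$-coloring of the remainder, and use the fact that any copy of $G$ created by re-inserting the vertex must pass through it and hence forces at least $\delta(G)$ neighbors in the relevant color class. The only difference is presentational — the paper argues by contradiction (a low-degree vertex yields, by pigeonhole, a class with at most $\delta(G)-1$ of its neighbors, into which it can be inserted), whereas you run the same count directly over all $k-1$ classes.
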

	\begin{proof}
		By contradiction, let there exists a vertex of $V(H)$ say $x$, such that $\deg(x)\leq\delta(G)(k-1)-1$. Since $H$ is $G$-free $k$-critical, so $\chi_G(H\setminus x)\leq k-1$. Set $H'= H\setminus x$. Without loss of generality(w.l.g)
		suppose that $V_1, V_2,\ldots, V_{k-1}$ is a partition  of $V(H')$, so that $H'[V_i]$ is $G$-free. Since $\deg(x)\leq\delta(G)(k-1)-1$, so there is at least one $j\in[k-1]$, such that  $|N(x)\cap V_j|\leq \delta(G)-1$.
		Hence $H[V_j\cup\{x\}]$ is a $G$-free subgraph of $H$. Thus, $\chi_G(H)\leq k-1$, a contradiction.
	\end{proof}
	 Now we present some upper bounds on  $\chi_G(H)$.
	In~\cite{szekeres1968inequality} Szekeres and Wilf have shown that $\chi(H)\leq 1+\max \delta(H')$, where $H'\subseteq H$. In the next theorem, we  extend  the Szekeres and Wilf theorem. 
	\begin{theorem}\label{the3} Let $G$ be a given graph. For an arbitrary graph $H$, we have:
		\[\chi_G(H)\leq 1+\lceil \frac{\max \delta(H')}{\delta(G)}\rceil.\]
		where the maximum is taken over all induced subgraphs $H'\subseteq H$. 
	\end{theorem}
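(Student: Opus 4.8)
The plan is to extend the greedy-colouring proof of the Szekeres--Wilf inequality, replacing the ordinary rule ``no neighbour may share my colour'' by the relaxed rule ``few neighbours may share my colour,'' with the threshold calibrated to $\delta(G)$. Throughout I assume $\delta(G)\geq 1$, which is implicit in the statement since $\delta(G)$ appears in a denominator. Write $d=\max\delta(H')$ for the quantity in the bound, the maximum being over induced subgraphs $H'\subseteq H$, and set the target $k=1+\lceil d/\delta(G)\rceil$. The arithmetic I will rely on is $k\delta(G)=\delta(G)+\lceil d/\delta(G)\rceil\,\delta(G)\geq \delta(G)+d>d$, so $k\delta(G)>d$ strictly.

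Next I would produce a degeneracy ordering of $V(H)$. Repeatedly deleting a vertex of minimum degree (choose $v_n$ of minimum degree in $H$, then $v_{n-1}$ of minimum degree in $H\setminus v_n$, and so on) yields an ordering $v_1,\ldots,v_n$ in which every $v_i$ has at most $d$ neighbours among $\{v_1,\ldots,v_{i-1}\}$: at the step it is deleted, $v_i$ is a minimum-degree vertex of the induced subgraph $H[\{v_1,\ldots,v_i\}]$, whose minimum degree is at most $d$, and its neighbours there are exactly its earlier neighbours. I then colour greedily along this order. When $v_i$ is reached, all of its at most $d$ earlier neighbours are already coloured; if every one of the $k$ colour classes held at least $\delta(G)$ of them, the total would be at least $k\delta(G)>d$, a contradiction. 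Hence some class contains at most $\delta(G)-1$ earlier neighbours of $v_i$, and I assign $v_i$ to it.

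The crux is verifying that the resulting colouring is $G$-free, and this is where I expect the main obstacle to lie: a monochromatic copy of $G$ can use vertices coloured \emph{after} $v_i$, so the purely local check performed when $v_i$ is coloured cannot by itself rule out every copy of $G$. The device that resolves this is a ``latest vertex'' argument. Suppose some colour class contained a copy of $G$, and let $v_j$ be the vertex of that copy latest in the ordering. Within the copy, $v_j$ has degree at least $\delta(G)$, and every neighbour of $v_j$ in the copy precedes it, hence is an earlier neighbour of $v_j$ in the same colour class. So at step $j$ the class to which $v_j$ was assigned already contained at least $\delta(G)$ earlier neighbours of $v_j$, contradicting the greedy choice. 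This converts a global forbidden configuration into a violation of the local back-degree condition at a single vertex, which is precisely what the greedy rule forbids. Therefore no class contains a copy of $G$, and $\chi_G(H)\leq k=1+\lceil d/\delta(G)\rceil$, as required; taking $G=K_2$ (so $\delta(G)=1$) recovers the Szekeres--Wilf bound.
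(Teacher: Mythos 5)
Your proof is correct. For the record, the paper states Theorem \ref{the3} without giving any proof at all; the route it plainly intends, given the machinery developed immediately before, is via critical subgraphs: if $\chi_G(H)=k$, pick a $G$-free $k$-critical subgraph $F$ of $H$ (Lemma \ref{l1}), apply Lemma \ref{the2} to get $\delta(F)\geq \delta(G)(k-1)$, and note that the induced subgraph $H'=H[V(F)]$ satisfies $\delta(H')\geq\delta(F)$, so that $\delta(G)(k-1)\leq \max\delta(H')$ and hence $k\leq 1+\lceil \max\delta(H')/\delta(G)\rceil$. You instead generalize the greedy proof of the Szekeres--Wilf bound directly: color along a degeneracy ordering, always placing a vertex in a class containing at most $\delta(G)-1$ of its back-neighbours (possible because $k\delta(G)>\max\delta(H')$), and kill any monochromatic copy of $G$ with the ``latest vertex'' argument. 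Both arguments are sound, and they are two faces of the same counting idea: your latest-vertex step is exactly the degree count underlying Lemma \ref{the2}, localized at a single vertex rather than applied to a whole critical subgraph. What your version buys is a self-contained, constructive proof (an explicit coloring procedure) that never invokes criticality; what the paper's route buys is brevity, since Lemmas \ref{l1} and \ref{the2} are established anyway and are reused later (e.g., in the proof of Theorem \ref{vthe1}).
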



	To prove the  next corollary,   we  need  the following  theorem by Lov{\'a}sz. 
	\begin{theorem}\label{th1}\cite{Lovasz} If $d_i\geq d_{i+1}$ for $1\leq i\leq k$, where $k$ is a positive integer,  such that $\sum \limits^{i=k}_{i=1} d_i \geq \Delta(G)-k+1$, then $V(G)$ can be decomposed into  $k$ classes $V_i$($~i=1,\ldots,k$), such that $\Delta(G[V_i])\leq d_i$ for each $i=1,2,\ldots,k$.
	\end{theorem}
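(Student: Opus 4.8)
The plan is to prove this by an extremal argument over vertex partitions, using a suitably \emph{weighted} count of monochromatic edges as a potential function. The guiding observation is that every vertex satisfies $\deg_G(v)\le\Delta(G)$, while the hypothesis can be rewritten as $\sum_{i=1}^{k}(d_i+1)\ge\Delta(G)+1$. So the strategy is to exhibit a partition minimizing a potential whose local-optimality inequality, after a one-vertex switch, lines up \emph{exactly} with this form of the hypothesis, forcing every degree constraint to hold automatically.

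First I would fix the weights $c_i=\tfrac{1}{d_i+1}$ and, ranging over all ordered partitions $(V_1,\dots,V_k)$ of $V(G)$ (with empty parts allowed), select one that minimizes
\[
\Phi(V_1,\dots,V_k)=\sum_{i=1}^{k}\frac{|E(G[V_i])|}{d_i+1}.
\]
A minimizer exists since there are only finitely many partitions. Next I would compute the effect of relocating a single vertex: if $v\in V_j$ is moved into class $V_i$, then $|E(G[V_j])|$ drops by $|N_G(v)\cap V_j|$ and $|E(G[V_i])|$ rises by $|N_G(v)\cap V_i|$, so $\Phi$ changes by $\tfrac{|N_G(v)\cap V_i|}{d_i+1}-\tfrac{|N_G(v)\cap V_j|}{d_j+1}$. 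Minimality forces this to be nonnegative for every $i$ and every $v\in V_j$, giving
\[
\frac{|N_G(v)\cap V_i|}{d_i+1}\ge\frac{|N_G(v)\cap V_j|}{d_j+1}\qquad(1\le i\le k),
\]
which also holds trivially (with equality) for $i=j$.

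The final step derives a contradiction from any violated constraint. Suppose some $v\in V_j$ had $|N_G(v)\cap V_j|\ge d_j+1$. Multiplying the displayed inequality by $(d_i+1)$, summing over $i$, and using $\sum_{i=1}^{k}|N_G(v)\cap V_i|=\deg_G(v)$ yields
\[
\deg_G(v)\ge\frac{|N_G(v)\cap V_j|}{d_j+1}\sum_{i=1}^{k}(d_i+1)\ge\sum_{i=1}^{k}(d_i+1)\ge\Delta(G)+1,
\]
contradicting $\deg_G(v)\le\Delta(G)$. Hence $|N_G(v)\cap V_i|\le d_i$ for every $v\in V_i$, i.e. $\Delta(G[V_i])\le d_i$ for all $i$, which is exactly the claimed decomposition. (Note the monotonicity hypothesis $d_i\ge d_{i+1}$ is not actually needed for this argument.)

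The one genuinely delicate point is the choice of weights $c_i=\tfrac{1}{d_i+1}$. With any other normalization the summed extremal inequality would produce a bound of the shape $\deg_G(v)\ge\big(\text{const}\big)\sum_i(d_i+1)$ whose constant fails to be $1$, leaving a gap against the hypothesis $\sum_i d_i\ge\Delta(G)-k+1$; it is precisely this weighting that makes the telescoping collapse to $\sum_i(d_i+1)$. So identifying the correct weights is the crux, after which the vertex-switching step is routine.
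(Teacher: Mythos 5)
Your proof is correct. The paper itself offers no proof of this statement---it is quoted verbatim from Lov\'asz's 1966 decomposition paper as an external citation---so there is no internal argument to compare against; your write-up is essentially Lov\'asz's own classical argument: take a partition minimizing the weighted monochromatic-edge count $\sum_i |E(G[V_i])|/(d_i+1)$, observe that local optimality under single-vertex moves gives $|N(v)\cap V_i|/(d_i+1)\ge |N(v)\cap V_j|/(d_j+1)$ for $v\in V_j$, and sum against the weights $(d_i+1)$ to contradict $\deg_G(v)\le\Delta(G)$ via $\sum_i(d_i+1)\ge\Delta(G)+1$. Your bookkeeping checks out (the $i=j$ term enters the sum with equality, empty parts are harmless, and a minimizer exists by finiteness), and your side remark is also right: the monotonicity hypothesis $d_i\ge d_{i+1}$ in the paper's statement is never used---it is not needed for the theorem. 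The only unstated convention worth flagging is $d_i\ge 0$ (so that $d_i+1\ge 1$ and the weights are well defined), which is implicit in the conclusion $\Delta(G[V_i])\le d_i$ anyway.
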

	\begin{corollary}\label{col2} Let $H$ and $G$ be two graphs with maximum degrees $\Delta(H)$ and $\Delta(G)$, respectively. Then:
		\[\chi_G(H)\leq\lceil \frac{\Delta(H)+1}{\Delta(G)}\rceil.\]
	\end{corollary}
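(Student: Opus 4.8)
The plan is to apply Lov\'asz's decomposition theorem (Theorem~\ref{th1}) to the graph $H$, using a constant target degree sequence, and to exploit the elementary observation that a subgraph whose maximum degree is strictly smaller than $\Delta(G)$ cannot contain a copy of $G$. Indeed, since $G$ has a vertex of degree $\Delta(G)$, any embedded copy of $G$ inside an induced subgraph $H[W]$ would force some vertex of $H[W]$ to have degree at least $\Delta(G)$ within $H[W]$; hence if $\Delta(H[W])\leq \Delta(G)-1$, then $H[W]$ is automatically $G$-free. This reduces the problem to partitioning $V(H)$ into few parts, each of small maximum degree.

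Concretely, I would set $k=\lceil \frac{\Delta(H)+1}{\Delta(G)}\rceil$ and choose $d_1=\cdots=d_k=\Delta(G)-1$. The monotonicity hypothesis $d_i\geq d_{i+1}$ of Theorem~\ref{th1} holds trivially because all the $d_i$ are equal, and each $d_i\geq 0$ since $G$ has order at least two, so that $\Delta(G)\geq 1$. The only real content is verifying the summation condition $\sum_{i=1}^{k} d_i\geq \Delta(H)-k+1$ of Theorem~\ref{th1}. With this choice it becomes $k(\Delta(G)-1)\geq \Delta(H)-k+1$, which rearranges to $k\,\Delta(G)\geq \Delta(H)+1$, i.e.\ $k\geq \frac{\Delta(H)+1}{\Delta(G)}$; and this holds by the very definition of $k$ as the ceiling of that quantity.

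Granting the hypotheses, Theorem~\ref{th1} (applied to $H$ in place of its generically-named graph) yields a decomposition of $V(H)$ into $k$ classes $V_1,\ldots,V_k$ with $\Delta(H[V_i])\leq \Delta(G)-1$ for every $i$. By the opening observation each $H[V_i]$ is $G$-free, so this decomposition is a $G$-free $k$-coloring of $H$, whence $\chi_G(H)\leq k=\lceil \frac{\Delta(H)+1}{\Delta(G)}\rceil$, as required. I do not anticipate a genuine obstacle here: the entire argument is an application of Theorem~\ref{th1} together with the degree characterization of $G$-freeness, and the single point demanding care is matching the constant degree sequence $d_i=\Delta(G)-1$ to the ceiling appearing in the statement, so that Lov\'asz's summation condition is met (tightly in the extremal case).
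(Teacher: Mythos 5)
Your proof is correct and takes essentially the same route as the paper: both apply Lov\'asz's decomposition theorem (Theorem~\ref{th1}) to $H$ with the constant degree sequence $d_i=\Delta(G)-1$, and conclude that each resulting class, having maximum degree at most $\Delta(G)-1$, is $G$-free. The only difference is bookkeeping: the paper separates the cases $\Delta(G)=1$, $\Delta(H)\leq\Delta(G)-1$, and $\Delta(H)=k\Delta(G)+r$, whereas you verify the summation condition $k\Delta(G)\geq\Delta(H)+1$ directly from the definition of $k$ as a ceiling, which handles all cases uniformly.
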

	\begin{proof}
		It is easy to show that $\chi_G(H)\leq \chi_{K_2}(H)= \chi(H)\leq \Delta(H)+1$. Hence, if $\Delta(G)=1$, then the corollary holds. 
		Let $\Delta(G)\geq 2$. If $\Delta(H)\leq \Delta(G)-1$, then $H$  has no copy of $G$ and $\chi_G(H)=1$. Assume that
		$\Delta(H)=k\Delta(G)+r$, where $k\geq 1$ and $r\in[\Delta-1]$. For $1\leq i\leq k+1$, we set $d_i=\Delta(G)-1$. Hence:  
		\begin{align*}
			d_1+d_2+\ldots+d_{k+1}&=(k+1)(\Delta(G)-1)\\
			&\geq k\Delta(G)-k+r\\
			&=\Delta(H)-(k+1)+1.
		\end{align*}
		By Theorem~\ref{th1}, $V(H)$ can be decomposed into classes $V_1,V_2,\ldots,V_{k+1}$, such that $\Delta(H[V_i])\leq\Delta(G)-1$ for each $i\in[k+1]$, and as a consequence $H[V_i]$ is  $G$-free. Therefore:
		\[\chi_G(H)\leq k+1=\lceil \frac{\Delta(H)+1}{\Delta(G)}\rceil.\]
	\end{proof}
	It should be mentioned that the same bound was proved for the defective chromatic number of graphs, by Frick and Henning.
	\begin{proposition}\label{prop1} Let $H$ and $G$  be two graphs. Then:
		\[\chi_G(H)\leq \lceil \frac{\chi(H)}{\chi(G)-1} \rceil.\]
	\end{proposition}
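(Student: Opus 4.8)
The plan is to build a $G$-free coloring of $H$ directly out of an optimal proper coloring, by bundling the ordinary color classes into groups of size $\chi(G)-1$. First I would fix a proper $\chi(H)$-coloring of $H$, which partitions $V(H)$ into independent sets $I_1,\ldots,I_{\chi(H)}$. Then, setting $k=\lceil \chi(H)/(\chi(G)-1)\rceil$, I would partition the index set $\{1,\ldots,\chi(H)\}$ into $k$ blocks $B_1,\ldots,B_k$, each of size at most $\chi(G)-1$, and for each $j$ define the merged class $W_j=\bigcup_{i\in B_j} I_i$. The map assigning color $j$ to every vertex of $W_j$ is the candidate $G$-free $k$-coloring, and the whole proof reduces to checking that each $H[W_j]$ is $G$-free.

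The key observation is that every $H[W_j]$ has small chromatic number. Since $W_j$ is a union of at most $\chi(G)-1$ independent sets of $H$, restricting the original proper coloring to $W_j$ already exhibits a proper coloring of $H[W_j]$ using at most $\chi(G)-1$ colors, so $\chi(H[W_j])\le \chi(G)-1$. Next I would invoke the monotonicity of the chromatic number under subgraph containment: if a graph $H'$ contains a copy of $G$, then $\chi(H')\ge \chi(G)$. Taking the contrapositive, $\chi(H[W_j])\le \chi(G)-1<\chi(G)$ forces $H[W_j]$ to contain no copy of $G$, i.e.\ to be $G$-free. Hence every color class induces a $G$-free subgraph, the candidate is a genuine $G$-free $k$-coloring, and $\chi_G(H)\le k=\lceil \chi(H)/(\chi(G)-1)\rceil$.

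There is no serious combinatorial obstacle in this argument; it is essentially a counting/grouping step paired with the two elementary facts above. The only point that genuinely requires care is the degenerate case $\chi(G)\le 1$: if $G$ is edgeless then $\chi(G)-1=0$ and the right-hand side is undefined, while the meaning of ``$G$-free'' degrades to a cardinality condition on the color classes rather than a chromatic one. I would therefore state and use the bound under the standing hypothesis $\chi(G)\ge 2$ (equivalently, $G$ has at least one edge), which is precisely the regime in which the displayed quantity is well defined and in which the subgraph-chromatic-number comparison above does its work.
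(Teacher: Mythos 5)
Your proof is correct. Note, however, that the paper states Proposition~\ref{prop1} with no proof at all (only the sharpness remark follows it), so there is no argument of the author's to compare against; your write-up actually supplies the missing proof. The route you take --- fix an optimal proper coloring of $H$, bundle its independent sets into $\lceil \chi(H)/(\chi(G)-1)\rceil$ groups of at most $\chi(G)-1$ classes each, and observe that each merged class $W_j$ satisfies $\chi(H[W_j])\le \chi(G)-1$ and hence cannot contain a copy of $G$ by monotonicity of the chromatic number under subgraphs --- is the natural argument, and it is consistent with the sharpness examples the paper lists (e.g.\ $G\cong K_{n+1}$, $H\cong K_{kn+1}$, where the groups are forced to be as large as possible). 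Your caveat about the degenerate case $\chi(G)=1$ is also well taken: the paper's blanket hypothesis ``let $H$ and $G$ be two graphs'' does not exclude edgeless $G$, for which the right-hand side is undefined, so the standing assumption $\chi(G)\ge 2$ (equivalently, $E(G)\neq\emptyset$) should indeed be made explicit.
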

	Note that this bound is sharp  for $G\cong H$, $G\cong K_{n+1}$ and $H\cong K_{kn+1}$ or $G=K_2$ and $H$ is an odd cycle or $K_n$.


	\begin{lemma}\label{l1}
		For each graph $H$ with $\chi_G(H)=k$, $H$ has a subgraph say $F$, such that  $F$ is a $G$-free $k$-critical graph
	\end{lemma}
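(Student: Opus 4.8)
The plan is to obtain $F$ as a subgraph of $H$ that is minimal among those still requiring the full $G$-free chromatic number $k$. First I would record the basic monotonicity fact that $\chi_G$ cannot increase under passing to a subgraph: if $F' \subseteq F$ and $\pi$ is a $G$-free $k$-coloring of $F$, then the restriction of $\pi$ to $V(F')$ is a $G$-free coloring of $F'$, since each color class of $F'$ is an induced subgraph of the corresponding color class of $F$ and hence still contains no copy of $G$ (deleting vertices or edges cannot create a new copy of $G$). Consequently $\chi_G(F') \leq \chi_G(F)$ for every subgraph $F' \subseteq F$.

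Next I would consider the family $\mathcal{F}$ of all subgraphs $F \subseteq H$ with $\chi_G(F) = k$. This family is nonempty, because $H$ itself belongs to it. Since $H$ is finite, I can choose $F \in \mathcal{F}$ that is minimal with respect to the subgraph relation, for instance a member minimizing $|V(F)| + |E(F)|$.

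Finally I would verify that this minimal $F$ is $G$-free $k$-critical. By construction $\chi_G(F) = k$. Let $F'$ be any proper subgraph of $F$. By the monotonicity above, $\chi_G(F') \leq k$, while the minimality of $F$ in $\mathcal{F}$ forbids $\chi_G(F') = k$; hence $\chi_G(F') \leq k-1$. This is precisely the defining property of a $G$-free $k$-critical graph, so $F$ is the desired subgraph.

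I expect no serious obstacle here: the only point requiring a moment's care is the monotonicity claim, namely that restricting a valid coloring to a subgraph keeps every color class free of $G$, which is what makes the minimization well behaved. Everything else is the standard critical-subgraph extraction argument transported to the $G$-free setting, and indeed it simply makes rigorous the remark following the definition of $G$-free $k$-critical graphs.
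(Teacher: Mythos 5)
Your proof is correct and is essentially the argument the paper itself intends: the paper states Lemma~\ref{l1} without a formal proof, relying on the remark following the definition of $G$-free $k$-critical graphs that ``by taking a minimal subgraph of $H$ with $G$-free chromatic number $k$, it is easy to say that this subgraph is $G$-free $k$-critical.'' Your write-up simply makes that extraction rigorous by spelling out the monotonicity of $\chi_G$ under subgraphs and the minimization over $|V(F)|+|E(F)|$, which is exactly the standard critical-subgraph argument the paper has in mind.
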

	\begin{lemma}\label{le2}
		For any integers $m,n$, where $n\leq m$, and any two graphs $H$ and $G$ with $\chi_G(H)=m$, $H$ has a subgraph say $F$, such that  $\chi_G(F)=n$.
	\end{lemma}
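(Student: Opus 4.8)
The plan is to show that $\chi_G$ changes by at most $1$ under the deletion of a single vertex, and then to invoke a discrete intermediate value argument. The engine of the proof is the two-sided estimate that for every graph $H$ and every vertex $x\in V(H)$,
\[
\chi_G(H)-1\leq \chi_G(H\setminus x)\leq \chi_G(H).
\]
The upper bound is immediate from monotonicity: any $G$-free coloring of $H$ restricts to a $G$-free coloring of the induced subgraph $H\setminus x$. For the lower bound, I would take an optimal $G$-free coloring of $H\setminus x$ using $\chi_G(H\setminus x)$ colors and assign $x$ a brand-new color; because $G$ has order at least $2$, the singleton color class $\{x\}$ contains no copy of $G$, so the result is a valid $G$-free coloring of $H$ with $\chi_G(H\setminus x)+1$ colors, whence $\chi_G(H)\leq \chi_G(H\setminus x)+1$.

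With this estimate in hand, I would delete the vertices of $H$ one at a time. Writing $N=|V(H)|$ and fixing an ordering, set $H=H_0\supset H_1\supset\cdots\supset H_{N-1}$, where $H_i$ is obtained from $H_{i-1}$ by removing one vertex, so that $H_{N-1}$ is a single vertex. Then $\chi_G(H_0)=m$, while $\chi_G(H_{N-1})=1$, since a one-vertex graph is $G$-free as $|V(G)|\geq 2$. By the estimate above, consecutive terms of the integer sequence $\chi_G(H_0),\chi_G(H_1),\ldots,\chi_G(H_{N-1})$ differ by at most one; as the sequence starts at $m$ and ends at $1$, it must assume every integer value in $\{1,2,\ldots,m\}$. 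In particular, for the prescribed $n$ with $1\leq n\leq m$ there is an index $i$ with $\chi_G(H_i)=n$, and $F:=H_i$ is the desired (induced) subgraph.

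The argument is elementary, and the only point demanding care is the base of the deletion chain together with the validity of the fresh-color step: both rely on the standing hypothesis that $G$ has at least two vertices, which guarantees that a one-vertex graph is $G$-free. (Should one wish to allow $n=0$, the chain may simply be extended to the empty graph, where $\chi_G=0$.) I do not anticipate a genuine obstacle here; once the single-vertex Lipschitz bound is in place, the discrete intermediate value principle does the rest of the work.
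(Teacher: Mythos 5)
Your proof is correct, and it takes a genuinely different route from the paper's. Both arguments run on the same engine---the fact that deleting a single vertex changes $\chi_G$ by at most one, with the lower bound coming from the fresh-color trick that requires $|V(G)|\geq 2$---but they deploy it differently. The paper descends through critical subgraphs: by Lemma~\ref{l1} it extracts a $G$-free $m$-critical subgraph $F$ of $H$, observes that deleting any vertex of $F$ drops $\chi_G$ to exactly $m-1$ (criticality supplies the upper bound, the fresh-color trick the lower), and then inducts on this construction, so that its final output is in fact a $G$-free $n$-critical subgraph of $H$---a conclusion slightly stronger than what the lemma asserts. You dispense with criticality entirely: you run the one-vertex Lipschitz bound along an arbitrary deletion chain $H=H_0\supset H_1\supset\cdots\supset H_{N-1}$ down to a single vertex and invoke the discrete intermediate value principle to locate some $H_i$ with $\chi_G(H_i)=n$. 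What your version buys is self-containedness and brevity: it does not rely on Lemma~\ref{l1} (which the paper states without proof), needs no induction, and proves exactly what is claimed. What it gives up is the extra structural information the paper's induction yields for free, namely that the subgraph realizing the value $n$ can be taken to be $G$-free $n$-critical, which is the form in which critical subgraphs are exploited elsewhere in the paper.
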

	\begin{proof}  For $n=0$, the result is trivial. Suppose that $1\leq n\leq m$, and $\chi_G(H)=m$. By Lemma \ref{l1}, $H$ has a subgraph say $F$, such that  $\chi_G(F)=m$. If $n=m$, the result holds, otherwise let $v\in V(F)$, hence  $\chi_G(F\setminus \{v\})=m-1$. So by induction, there is a $G$-free $n$-critical subgraph of both $F\setminus \{v\}$ and  $H$, hence the proof is complete. 
	\end{proof}
	By Lemma \ref{le2},  it is easy to check that for each $G$-free $n$-critical graph $H$, if $F$ is a $G$-free subgraph of $H$, then $\chi_G(H\setminus F)=n-1$.

	
	\subsection{Proof of theorem \ref{one-thm1}.}\hfilneg
	
	In~\cite{nordhaus1956complementary}, Nordhaus and Gaddum showed that  for each graph $H$ with $n$ vertices, $\chi(H)+\chi(\overline{H})\leq  n+1$.  In the following  results,  we extend  the Nordhaus and Gaddum-problem. To simplify the comprehension, let us split the proof of Theorem \ref{one-thm1} into small parts. We begin with a  very useful general upper bound in the following theorem:
	\begin{theorem}\label{vthe1} Let $H$ and $G$ be two graphs, where $\delta(G)=\delta$, $|V(H)|=n$. If $G\cong K_{\delta+1}$, then $n\neq k\delta(k,\delta\geq 3)$, and if $G\ncong K_{\delta+1}$, then $H$ is critical. Hence:
		\begin{itemize}
		
			\item[(I):] $ \chi_G(H)+ \chi_G(\overline{H})\leq \lceil\frac{n}{\delta}\rceil+1 $,
			\item[(II):] This bound is sharp.
		\end{itemize}

	\end{theorem}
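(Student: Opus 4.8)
The plan is to split the proof into the two hypothesis regimes, since each one is tailored to a different tool assembled earlier. Throughout write $k_1=\chi_G(H)$ and $k_2=\chi_G(\overline{H})$, and keep at hand the two elementary ceiling facts $\lceil a/\delta\rceil+\lceil b/\delta\rceil\le\lceil(a+b)/\delta\rceil+1$ and $\lceil m/\delta\rceil-t=\lceil(m-t\delta)/\delta\rceil$ for integer $t$, which will absorb all of the rounding. Part (II) will then be settled by a single explicit construction.

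First I would treat the case $G\cong K_{\delta+1}$ (so $\chi(G)=\delta+1$) with $n\neq k\delta$. Here I avoid critical subgraphs altogether and instead push the classical Nordhaus--Gaddum inequality $\chi(H)+\chi(\overline{H})\le n+1$ through Proposition~\ref{prop1}. Since $\chi(G)-1=\delta$, that proposition gives $k_1\le\lceil\chi(H)/\delta\rceil$ and $k_2\le\lceil\chi(\overline{H})/\delta\rceil$; adding these and applying the first ceiling fact yields $k_1+k_2\le\lceil(\chi(H)+\chi(\overline{H}))/\delta\rceil+1\le\lceil(n+1)/\delta\rceil+1$. The precise role of the hypothesis $n\neq k\delta$ is to force $\delta\nmid n$, whence $\lceil(n+1)/\delta\rceil=\lceil n/\delta\rceil$ and the bound collapses to $\lceil n/\delta\rceil+1$. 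This is exactly the step that would lose a unit if $n$ were a multiple of $\delta$, which is what the exclusion is guarding against (the restriction $k,\delta\ge3$ confines attention to the main range where this is the binding obstruction).

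Next I would handle $G\ncong K_{\delta+1}$ with $H$ assumed $G$-free critical (the case where $\overline{H}$ is critical being symmetric). Proposition~\ref{prop1} is now too weak, because $\chi(G)-1$ can be much smaller than $\delta$, so I exploit criticality directly. Since $H$ is $G$-free $k_1$-critical, Lemma~\ref{the2} gives $\delta(H)\ge\delta(k_1-1)$, so every vertex satisfies $\deg_{\overline{H}}(v)=n-1-\deg_H(v)\le n-1-\delta(k_1-1)$, i.e. $\Delta(\overline{H})\le n-1-\delta(k_1-1)$. Feeding this into Corollary~\ref{col2} and using $\Delta(G)\ge\delta(G)=\delta$ gives $k_2\le\lceil(\Delta(\overline{H})+1)/\Delta(G)\rceil\le\lceil(n-\delta(k_1-1))/\delta\rceil=\lceil n/\delta\rceil-(k_1-1)$, which rearranges to $k_1+k_2\le\lceil n/\delta\rceil+1$, establishing (I).

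For (II) I would exhibit equality with $G\cong K_{\delta+1}$ and $H\cong K_n$: any color class of a $G$-free coloring of $K_n$ spans at most $\delta$ vertices and this is optimal, so $\chi_G(K_n)=\lceil n/\delta\rceil$, while $\overline{K_n}$ is edgeless and hence $G$-free, giving $\chi_G(\overline{H})=1$ and total exactly $\lceil n/\delta\rceil+1$. The main obstacle throughout is the ceiling bookkeeping rather than any structural difficulty: all the slack in the statement lives in the single unit coming from $\lceil a/\delta\rceil+\lceil b/\delta\rceil\le\lceil(a+b)/\delta\rceil+1$, and the delicate point I expect to dwell on is checking that the two hypotheses ($n\neq k\delta$ in the complete case, criticality in the general case) are exactly what prevent this unit from inflating to two.
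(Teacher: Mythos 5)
Your proposal is correct in substance, and for the complete-graph case it takes a genuinely different and much shorter route than the paper. For $G\ncong K_{\delta+1}$ with $H$ critical, your argument coincides with the paper's (its Claim 2): Lemma~\ref{the2} gives $\delta(H)\ge\delta(k_1-1)$, hence $\Delta(\overline{H})\le n-1-\delta(k_1-1)$, and Corollary~\ref{col2} finishes. You are in fact slightly more careful than the paper, which silently writes $\delta$ in the denominator of Corollary~\ref{col2} where the corollary has $\Delta(G)$; your observation that $\Delta(G)\ge\delta(G)$ legitimizes this step. You also correctly see that no induction is needed here, whereas the paper wraps this case in an induction on $n$ whose hypothesis \eqref{e6} is never used. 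For $G\cong K_{\delta+1}$ the proofs diverge: the paper extracts disjoint $\delta$-cliques $W'\subseteq V(H)$, $W''$ (a clique of $\overline{H}$), proves a clique-intersection claim (its Claim 3), and runs an induction that removes $2\delta$ vertices at a time; you instead chain Proposition~\ref{prop1} (with $\chi(G)-1=\delta$), the classical Nordhaus--Gaddum bound $\chi(H)+\chi(\overline{H})\le n+1$, and the ceiling inequality $\lceil a/\delta\rceil+\lceil b/\delta\rceil\le\lceil(a+b)/\delta\rceil+1$, with non-divisibility collapsing $\lceil(n+1)/\delta\rceil$ to $\lceil n/\delta\rceil$. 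This is cleaner and makes completely transparent where the hypothesis $n\neq k\delta$ enters. What the paper's heavier machinery buys is reusability: the Claim 3 argument is exactly what later produces the bound $k+2$ when $n=k\delta$ (Theorem~\ref{vthe2}) and the post-theorem remark that the bound $3$ still holds when $n=2\delta$; your ceiling argument cannot recover either refinement, since it intrinsically loses one unit whenever $\delta\mid n$. Your sharpness witness ($G=K_{\delta+1}$, $H=K_n$ with $\delta\nmid n$, so $\chi_G(K_n)=\lceil n/\delta\rceil$ and $\chi_G(\overline{K_n})=1$) is valid and simpler than the paper's, though it only exhibits sharpness in the complete-graph branch; the paper additionally gives $H=G=K_{4,4}$ and $H=G=C_5$ for the non-complete (critical) branch.

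Two boundary caveats, neither fatal. First, if one reads the hypothesis ``$n\neq k\delta$ $(k,\delta\ge 3)$'' literally, then $n=2\delta$ and the values $\delta\le 2$ are inside the theorem's scope, and there your Case-2 bound degrades to $\lceil n/\delta\rceil+2$; however, the paper's own proof of Case 2 also invokes non-divisibility of $n$ by $\delta$ (in the sub-case where equality holds in \eqref{e7}), so your reading matches the paper's effective scope, and the $n=2\delta$ statement is relegated by the author to a separate remark. Second, for $\delta=1$, i.e.\ $G=K_2$, your chain yields $n+2$ rather than $n+1$; there you should simply cite the classical Nordhaus--Gaddum theorem itself rather than route through Proposition~\ref{prop1}.
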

	\begin{proof}
		(I).	By induction on $n$, when $n \leq |G|-1$, (I) holds. Suppose
		that $n\geq |V(G)|$ and consider the  following cases:
		
		\bigskip
		{\bf Case1 }: $G\ncong K_{\delta+1}$. Assume that $\chi_G(H)=t_1$ and $\chi_G(\overline{H})=t_2$. We shall show that $t_1+t_2\leq \lceil\frac{n}{\delta}\rceil+1$. As  $H$ is $G$-free $t_1$-critical,  for each $v\in V(H)$, we have $\chi_G(H\setminus\{v\})\leq t_1-1$. So,  Theorem \ref{th1} implies that $\delta(H)\geq (t_1-1)\delta$. Let $v$ be a vertex of $V(H)$. Hence, by induction  assumption we have:   
		\begin{equation}\label{e6}
			\chi_G(H\setminus\{v\})+ \chi_G(\overline{H}\setminus\{v\})\leq \lceil\frac{n-1}{\delta}\rceil+1.
		\end{equation}
		Now we have a  claim as follow:
		
		\bigskip
		{\bf Claim 2}: $\chi_G(\overline{H})\leq\lceil   \frac{n}{\delta}\rceil -t_1+1$.\\
		Since $\chi_G(H\setminus\{v\})\leq t_1-1$ and $\delta(H)\geq (t_1-1)\delta$, it yields that $\Delta(\overline{H})= n-1-\delta(H)\leq n-1-(t_1-1)\delta$. Hence by Corollary \ref{col2}, we have: 
		\[ \chi_G(\overline{H})\leq \lceil\frac{\Delta(\overline{H})+1}{\delta}\rceil\leq \lceil\frac{ n-(t_1-1)\delta}{\delta}\rceil=\lceil\frac{ n}{\delta}\rceil-(t_1-1).\]
		Now, by Claim $2$, $t_1+\chi_G(\overline{H})\leq  \lceil\frac{ n}{\delta}\rceil+1$. Therefore,   $\chi_G(H)+\chi_G(\overline{H})\leq  \lceil\frac{ n}{\delta}\rceil+1$,  and the proof of Case $1$ is complete.
		
		\bigskip
		{\bf Case 2 }: $G\cong K_{\delta+1}$. As $G\cong K_{\delta+1}$ and $n\neq k\delta$, one can check that  $\omega(H)\geq\delta+1$ and  $\omega(\overline{H})\geq \delta+1$, otherwise either $H$ is $G$-free or $\overline{H}$ is $G$-free, then by this fact that $\chi_G(H)\leq \lceil \frac{n(H)}{\delta}\rceil$, the proof is complete. Hence, suppose that $V'=\{v_1,v_2,\ldots,v_{\delta+1}\},V''=\{v'_1,v'_2,\ldots,v'_{\delta+1}\} \subseteq V(H)$, where $H[V']\cong K_{\delta+1}$ and $\overline{H}[V'']\cong K_{\delta+1}$. As, $H[V']\cong K_{\delta+1}$ and $\overline{H}[V'']\cong K_{\delta+1}$, by considering $V'\cap V''$,  one can say that $|V'\cap V''|\leq 1$. Therefore, there exists $W'\subseteq V'$ and $W''\subseteq V''$, so that $|W'|=|W''|=\delta$,  $W'\cap W''=\emptyset$, $H[W']\cong K_{\delta}$ and $\overline{H}[W'']\cong K_{\delta}$. Set $W=W'\cup W''$. Now, we have a claim as follow:
		
		\bigskip
		{\bf Claim 3}:  $|\omega (H[W])|\leq \delta+1$, $|\omega (\overline{H}[W]|\leq \delta+1$ and  $|\omega (H[W])|+|\omega (\overline{H}[W]|\leq  2\delta+1$.\\
		As, $H[W']\cong K_{\delta}$ and $\overline{H}[W'']\cong K_{\delta}$, so $\delta\leq |\omega (H[W])|\leq \delta+1$, $\delta\leq|\omega (\overline{H}[W]|\leq \delta+1$. Now, by contrary, suppose that $|\omega (H[W])|=|\omega (\overline{H}[W]|= \delta+1$. As $|\omega (H[W])|=\delta+1$, there exists a vertex of $W''$ say $v''$, such that $W'\subseteq N_H(v'')$. Since $|\omega  (\overline{H}[W]|=\delta+1$, there exists a vertex of $W'$ say $v'$, such that $W''\subseteq N_{\overline{H}}(v')$, a contradiction to $W'\subseteq N_H(v'')(v'v''\in E(H))$. Hence, $|\omega (H[W])|+|\omega (\overline{H}[W]|\leq  2\delta+1$.\\
		
		Set $H_1=H[V\setminus W]$. Now,  by induction hypothesis:
		\begin{equation}\label{e7}
			\chi_G(H_1)+ \chi_G(\overline{H}_1)\leq \lceil\frac{n(H_1)}{\delta}\rceil+1= \lceil\frac{n-2\delta}{\delta}\rceil+1=\lceil\frac{n}{\delta}\rceil-1.
		\end{equation}
		
		And  by Claim 3:  
		\begin{equation}\label{e8}
			\chi_G(H[W])+\chi_G\overline{H}[W]\leq  3.
		\end{equation} 
		If $|\omega (H[W])|= |\omega (\overline{H})[W]|= \delta$ , then  $\chi_G(H[W])+\chi_G\overline{H}[W]=2$, that is $H[W]$ and $\overline{H}[W]$ are $G$-free. So   \ref{e7} implies that, $\chi_G(H)+\chi_G(\overline{H})\leq \lceil\frac{n}{\delta}\rceil+1$. Hence, we may suppose that $\chi_G(H[W])+\chi_G\overline{H}[W]=3$. 
		If strict inequality holds in  $(\ref{e7})$, then by using $(\ref{e8})$, we have: 
		\begin{equation}\label{e10}
			\chi_G(H)+ \chi_G(\overline{H})\leq\chi_G(H_1)+ \chi_G(\overline{H_1})+3\leq \lceil\frac{n}{\delta}\rceil+1. 
		\end{equation}
		
		Now,  suppose that inequality holds in both $(\ref{e7})$ and $(\ref{e8})$, and by Claim 3, w.l.g suppose that $|\omega (H[W])|=\delta+1$ and $|\omega (\overline{H})[W]|= \delta$. Since, $|\omega (H[W])|=\delta+1$, so there exists at least one vertex of $W''$ say $v''$, such that $W'\subseteq N_H(v'')$. Set $v\in W'$, $H_2=H_1\cup\{v\}$.
		
		Now, since  $|W|=2\delta$ and  $|V(H)|\neq k\delta$, so $|V(H_1)|\neq k'\delta$.	Therefore, as $	\chi_G(H_1)+ \chi_G(\overline{H_1})=\lceil\frac{n}{\delta}\rceil-1$, and $|V(H_1)|\neq k'\delta$, one can check that $\lceil\frac{n}{\delta}\rceil=\lceil\frac{n+1}{\delta}\rceil$. Now,  by the induction hypothesis, $\chi_G(H_2)+\chi_G(\overline{H_2})=\lceil\frac{n}{\delta}\rceil-1$. Since $v\in W'$ and $|\omega (\overline{H}[W]|= \delta$, so   $|\omega (H[W\setminus \{v\}])|=\delta$ and $|\omega (\overline{H}[W\setminus \{v\}]|\leq \delta$. That is:
		\[ 	\chi_G(H[W\setminus \{v\}])+\chi_G\overline{H}[W\setminus \{v\}]=2.\]
		Now, as $\chi_G(H_2)+\chi_G(\overline{H_2})=\lceil\frac{n}{\delta}\rceil-1$, we can check that 	$\chi_G(H)+ \chi_G(\overline{H})\leq\chi_G(H_2)+ \chi_G(\overline{H_2})+2= \lceil\frac{n}{\delta}\rceil+1$, and the proof is complete.
		
		Now by Cases 1, 2 the proof of (I) is complete. To prove (II), consider the  following cases:	
		
		\bigskip
		{\bf Case 1:} $G\ncong K_{\delta+1}$.\\
		In this case, set $H=K_{4,4}$ and $G=H$. As, $G=H$, and $\overline{H}=2K_4$ is $G$-free, thus $\chi_G(H)=2$, $\chi_G(\overline{H})=1$, that is,
		$\chi_G(H)+\chi_G(\overline{H})=2+1=3=\lceil\frac{8}{4}\rceil+1$.
		
		By setting $H=G=C_5$. As, $H=\overline{H}=G=C_5$, so  $\chi_G(H)+\chi_G(\overline{H})=2+2=4=\lceil\frac{5}{2}\rceil+1$.
		
		\bigskip
		{\bf Case 2:}  $G\cong K_{\delta+1}$, and  $|V(H)|\neq k\delta$, $k,\delta\geq 3$.\\
		Consider the graph $H$, where $|V(H)|=kd+1$, $H\cong K_{(k-1)d}+(\delta+1)K_1$ and $\overline{H}\cong K_{\delta+1}$. As $G\cong K_{\delta+1}$, and $\overline{H}\cong K_{\delta+1}$, thus $\chi_G(\overline{H})=2$. Now, we would like to show that $\chi_G(H)=k$. By definition of $H$, one can check that  $|\omega (H[W])|= (k-1)d+1$.  Suppose that, $V=V(H)=\{v_1,v_2,\ldots, v_{k\delta+1}\}$, $V'=\{v_1,\ldots, v_{(k-1)\delta}\}$, where $H[V']\cong K_{(k-1)\delta}$ and $H[V\setminus V']\cong (\delta+1)K_2$. It is easy to say that $\chi_G(H)\leq k$, by considering  $V_1,V_2,\ldots,V_{k}$  as a coloring classes of $V(H)$, where $V_i\subseteq V'$,  $|V_i|=\delta$ for each $1\leq i\leq k-1$ and $V_k= V(H)\setminus V'$. Since $|V_i|=d$, for $1\leq i\leq k-1$, and $\omega (H[V_k])=1$, so $H[V_i]$ is $G$-free, and  $\chi_G(H)\leq k$. Also, since $|\omega (H)|= (k-1)\delta+1$ and $G\cong K_{\delta+1}$, it can be checked that  $\chi_G(H)\geq k$. Therefore, $\chi_G(H)=k$ and $\chi_G(H)+\chi_G(\overline{H})=k+2=	\lceil\frac{n}{\delta}\rceil+1$.  Which implies that  the proof is complete.
	\end{proof}
	With an  argument similar to Claim 3, one can say that  if $H$ and $G$ be two graphs, where $G\cong K_{d+1}$, and  $|V(H)|=2d$, then:
	\[ \chi_G(H)+ \chi_G(\overline{H})\leq 3=\lceil\frac{n}{d}\rceil+1.\]
	Also,  if $H$ and $G$ be two graphs, where $G\ncong K_{d+1}$, and  $|V(G)|= d+2$, then:
	\[ \chi_G(H)+ \chi_G(\overline{H})\leq \lceil\frac{n}{d}\rceil+2.\]
	\begin{lemma}\label{le3}
	 Let $H$ and $G$ be two graphs, where $\delta(G)=\delta$, $|V(H)|=n$, $G\ncong K_{d+1}$, and $H$ is not a critical graph. Hence:
	 \[ \chi_G(H)+ \chi_G(\overline{H})\leq \lceil\frac{n}{\delta}\rceil+2.\]
	\end{lemma}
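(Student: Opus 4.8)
The plan is to reduce everything to the already-established critical case (Theorem~\ref{vthe1}, Case~1) by passing to a critical subgraph and then paying a controlled price to return to $H$ and $\overline{H}$. Write $t_1=\chi_G(H)$. By Lemma~\ref{l1} the graph $H$ contains a $G$-free $t_1$-critical subgraph $F$; I would set $m=|V(F)|\le n$ and $S=V(H)\setminus V(F)$. Since $G\ncong K_{\delta+1}$ and $F$ is critical, Theorem~\ref{vthe1} applies directly to $F$ and gives $\chi_G(F)+\chi_G(\overline{F})\le \lceil m/\delta\rceil+1$; as $\chi_G(F)=t_1$, this rearranges to $\chi_G(\overline{F})\le \lceil m/\delta\rceil+1-t_1$.

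The heart of the argument is to bound $\chi_G(\overline{H})$ by decoupling the ``critical part'' $V(F)$ from the remainder $S$. First I would observe that, since $F$ is a subgraph of $H$ living on $V(F)$, we have $\overline{H}[V(F)]=\overline{H[V(F)]}\subseteq \overline{F}$, so monotonicity of $\chi_G$ under subgraphs yields $\chi_G(\overline{H}[V(F)])\le \chi_G(\overline{F})$. Next, colouring $V(F)$ and $S$ with two disjoint palettes produces a valid $G$-free colouring of $\overline{H}$ (each colour class lies wholly inside $V(F)$ or wholly inside $S$), whence $\chi_G(\overline{H})\le \chi_G(\overline{H}[V(F)])+\chi_G(\overline{H}[S])$. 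For the second term I would invoke the elementary fact already used in the proof of Theorem~\ref{vthe1}, namely $\chi_G(K)\le \lceil |V(K)|/\delta\rceil$ for every graph $K$ (any $\delta$ vertices span a $G$-free set because $|V(G)|\ge \delta+1$), giving $\chi_G(\overline{H}[S])\le \lceil (n-m)/\delta\rceil$.

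Combining the three estimates yields
\[
\chi_G(H)+\chi_G(\overline{H})\le t_1+\big(\lceil \tfrac{m}{\delta}\rceil+1-t_1\big)+\lceil \tfrac{n-m}{\delta}\rceil=\lceil \tfrac{m}{\delta}\rceil+\lceil \tfrac{n-m}{\delta}\rceil+1 .
\]
The proof then closes with the subadditivity of the ceiling function, $\lceil \tfrac{m}{\delta}\rceil+\lceil \tfrac{n-m}{\delta}\rceil\le \lceil \tfrac{n}{\delta}\rceil+1$, which gives exactly $\chi_G(H)+\chi_G(\overline{H})\le \lceil n/\delta\rceil+2$, as required. I expect the main obstacle to be the decoupling step for $\chi_G(\overline{H})$: one must resist re-inserting the vertices of $S$ one at a time, since each re-insertion costs a $+1$ on the complement side and, combined with an inductive $+2$, overshoots the target; instead one should split $\overline{H}$ globally into its $V(F)$- and $S$-parts so that only the cheap universal bound is spent on $S$. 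Once that splitting is in place, the remainder is just the critical-case theorem plus routine ceiling arithmetic, and notably no induction on $n$ is needed.
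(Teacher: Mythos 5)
Your proof is correct and follows essentially the same route as the paper: extract a $G$-free $t_1$-critical subgraph $F$ via Lemma~\ref{l1}, apply Theorem~\ref{vthe1} to $F$, bound $\chi_G(\overline{H})$ by coloring $\overline{H}[V(F)]$ and $\overline{H}[V(H)\setminus V(F)]$ with disjoint palettes and the cheap bound $\lceil\,\cdot\,/\delta\rceil$ on the leftover part, then finish with ceiling subadditivity. If anything, your write-up is slightly tighter: the paper wraps the identical argument in an induction on $n$ whose hypothesis is never used, and it asserts the equality $\chi_G(\overline{F})=\chi_G(\overline{H}[V(F)])$ where only the inequality $\chi_G(\overline{H}[V(F)])\le\chi_G(\overline{F})$ (which you justify correctly by monotonicity, since $F$ need not be induced) is actually available and needed.
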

\begin{proof}
		By induction on $n$, when $n \leq |G|-1$, the lemma holds. Suppose that $n\geq |V(G)|$, $\chi_G(H)=t_1$ and $\chi_G(\overline{H})=t_2$. We shall show that $t_1+t_2\leq \lceil\frac{n}{\delta}\rceil+2$. By Lemma \ref{l1}, $H$ has a $G$-free $t_1$-critical subgraph. Assume that $F$ is a $t_1$-critical subgraph of $H$ with $|V(F)|=n_1\leq n-1$.   Let  $V_1=V(H)\setminus V(F)$ and $|V_1|=n_2$. W.l.g suppose that $V_1=\{v_1,v_2\ldots,v_{n_2}\}$. As $F$ is a $t_1$-critical, so  by Theorem \ref{vthe1},
	
	\begin{equation}\label{e3}
		\chi_G(F)+ \chi_G(\overline{F})\leq \lceil\frac{n_1}{\delta}\rceil+1.
	\end{equation}  
	Since $F$ is $G$-free $t_1$-critical subgraph of $H$, then $\chi_G(H)=\chi_G(F)=t_1$. Let $\chi_G(\overline{H}[V(F)])=t_3$. Hence: 
	\begin{equation}\label{e4}
		\chi_G(F)+ \chi_G(\overline{F})= \chi_G(H)+ \chi_G(\overline{H}[V(F)])=t_1+t_3\leq \lceil\frac{n_1}{\delta}\rceil+1. 
	\end{equation}

	We note that $ \chi_G(\overline{H})\leq \chi_G(\overline{H}[V(F)])+\chi_G(\overline{H}[V_1])$. Now  we can partition the vertices of $V_1$ into $m$ sets with size at most $\delta+1$, say $W_1,W_2,\ldots,W_m$. Since $G\ncong K_{\delta+1}$, thus $|V(G)|\geq \delta+2$,  that is $\overline{H}[W_i]$ is $G$-free. So, $\chi_G(\overline{H}[V_1])\leq \lceil  \frac{n_2}{\delta+1}\rceil \leq \lceil   \frac{n_2}{\delta}\rceil$. Hence $ \chi_G(\overline{H})\leq t_3+\chi_G(\overline{H}[V_1])$. That is:
	
	\[ \chi_G(H)+ \chi_G(\overline{H})\leq \lceil\frac{n}{\delta}\rceil+2 .\]

	Which means that the proof is complete.
\end{proof}
	In the next theorem, we determine an upper bound for  $\chi_G(H)+ \chi_G(\overline{H})$, where $G\cong K_{d+1}$, and  $|V(H)|=kd (k, d\geq 3)$. Also, we characterize certain classes of graphs with better upper bound.
	\begin{theorem}\label{vthe2} Suppose that $H$ and $G$ be two graphs, where $G\cong K_{d+1}$, and  $|V(H)|=kd(k,d\geq 3)$, then:
		\[ \chi_G(H)+ \chi_G(\overline{H})\leq k+2.\]
		And if:
		\begin{itemize}
			\item[(I):]  Either, $H$ or $\overline{H}$ is $G$-free critical. 
			\item[(II):] There exists a subset of $V(H)$ say $S$, with  size $2d$, such that $H[S]$ and $\overline{H}[S]$ is $G$-free.
			\item[(III):] There exists a subset of $V(H)$ say $S$, with  size $2d$, such that either $g(H[S])=2d$ or $g(\overline{H}[S])=2d$.
		\end{itemize}
		Where, $g(H)$ is the girth of $H$ and is defined as the size of the smallest cycle in  $H$. Then:
		\[ \chi_G(H)+ \chi_G(\overline{H})\leq k+1.\]
	\end{theorem}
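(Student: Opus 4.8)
The plan is to prove the unconditional inequality $\chi_G(H)+\chi_G(\overline{H})\le k+2$ first, and then to extract the three refinements, using throughout that for $G\cong K_{d+1}$ a color class is $G$-free precisely when its clique number is at most $d$, that $\delta(G)=\Delta(G)=d$, and that $\lceil n/d\rceil=k$. First I would dispose of the trivial situation: if one of $H,\overline{H}$ is $G$-free then that graph has $\chi_G=1$, while the other admits a partition into $k$ classes of size $d$ and so has $\chi_G\le k$; thus the sum is at most $k+1$. Hence I may assume that both $H$ and $\overline{H}$ contain a copy of $K_{d+1}$. For the general bound I would run the critical-subgraph decomposition of Lemma~\ref{le3}, adapted to the complete case. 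Put $t_1=\chi_G(H)$ and, via Lemma~\ref{l1}, choose a $G$-free $t_1$-critical subgraph $F$ with $|V(F)|=n_1$; set $V_1=V(H)\setminus V(F)$, $n_2=|V_1|$. Since $F\subseteq H[V(F)]\subseteq H$ we get $\chi_G(H[V(F)])=t_1$, and by Lemma~\ref{the2} applied to $F$ the induced graph $H[V(F)]$ has minimum degree at least $d(t_1-1)$, whence $\Delta(\overline{H}[V(F)])\le n_1-1-d(t_1-1)$ and Corollary~\ref{col2} gives $\chi_G(\overline{H}[V(F)])\le\lceil n_1/d\rceil-(t_1-1)$. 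Adding these,
\[
\chi_G(H[V(F)])+\chi_G(\overline{H}[V(F)])\le \lceil n_1/d\rceil+1 .
\]
Because $G\cong K_{d+1}$ I may only split $V_1$ into classes of size at most $d$, so $\chi_G(\overline{H}[V_1])\le\lceil n_2/d\rceil$; colouring $V(F)$ and $V_1$ with disjoint palettes yields $\chi_G(\overline{H})\le\chi_G(\overline{H}[V(F)])+\lceil n_2/d\rceil$. Combining with the elementary inequality $\lceil n_1/d\rceil+\lceil n_2/d\rceil\le k+1$ (valid since $n_1+n_2=kd$) produces $\chi_G(H)+\chi_G(\overline{H})\le k+2$.

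The refinement under (I) drops out of the same computation: if $H$ (or, symmetrically, $\overline{H}$) is itself $G$-free critical then $F=H$, $n_1=n$, $n_2=0$, and the displayed inequality becomes $\chi_G(H)+\chi_G(\overline{H})\le\lceil n/d\rceil+1=k+1$. For (III) I would first note that it is merely a special case of (II): on a set of $2d$ vertices a girth equal to $2d$ forces the induced graph to be the Hamiltonian cycle $C_{2d}$, which for $d\ge3$ is triangle-free and whose complement has clique number $\alpha(C_{2d})=d$; thus both $H[S]$ and $\overline{H}[S]$ are $G$-free, which is exactly condition (II). So it suffices to treat (II).

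Given $S$ with $|S|=2d$ and $H[S],\overline{H}[S]$ both $G$-free, the idea is to use $S$ as the distinguished $2d$-set in the reduction behind Theorem~\ref{vthe1}, realising the favourable configuration in which the two ``diagonal'' cliques both have size exactly $d$, so that $\chi_G(H[S])+\chi_G(\overline{H}[S])=2$ rather than $3$. Writing $H_1=H[V\setminus S]$ with $|V(H_1)|=(k-2)d$, a bound of the form $\chi_G(H_1)+\chi_G(\overline{H_1})\le k-1$ would combine with this $+2$ from $S$ to give precisely $k+1$. I expect this last bound to be the main obstacle, and it is a genuine one: $H_1$ has order $(k-2)d$, an exact multiple of $d$, so Theorem~\ref{vthe1} is unavailable, and the unconditional step of the present theorem only delivers $k$. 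I would attempt to close the gap by induction on $k$, taking the $2d$-vertex remark preceding the theorem ($\chi_G+\chi_G\le 3$) as the base, and, when $H_1$ is not already critical, restoring the non-divisibility hypothesis of Theorem~\ref{vthe1} by transferring a single vertex of $S$ into $H_1$ — its membership in the $G$-free set $S$ should ensure that the transfer creates no new copy of $G$ — thereby passing to a graph of order $(k-2)d+1$ on which Theorem~\ref{vthe1} applies. Arranging this transfer so that it respects the colourings of $H_1$ and $\overline{H_1}$ simultaneously, and so that it actually sharpens the bound from $k$ to $k-1$ rather than merely re-proving $k$, is the delicate heart of the argument and the step I would expect to consume most of the work.
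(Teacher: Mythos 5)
Your unconditional bound and parts (I) and (III) are correct, and they essentially follow the paper's route. For $\chi_G(H)+\chi_G(\overline{H})\le k+2$ you actually fill in what the paper only gestures at (``with an argument similar to Theorem~\ref{vthe1}''): the decomposition into a $G$-free $t_1$-critical subgraph $F$ plus $V_1$, the degree bound from Lemma~\ref{the2}, Corollary~\ref{col2} on $\overline{H}[V(F)]$, and the arithmetic fact $\lceil n_1/d\rceil+\lceil n_2/d\rceil\le k+1$ when $n_1+n_2=kd$ all check out, so there you are more complete than the source. Part (I) is the same computation as the paper's (criticality gives $\delta(H)\ge d(\chi_G(H)-1)$, hence a bound on $\Delta(\overline{H})$, then Corollary~\ref{col2}), and your reduction of (III) to (II) --- girth $2d$ on $2d$ vertices forces $H[S]\cong C_{2d}$, whose complement has clique number $d$ --- is exactly the paper's argument.

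The genuine gap is part (II), and you have located it precisely. Your scheme needs $\chi_G(H_1)+\chi_G(\overline{H_1})\le k-1$ for $H_1=H\setminus S$ of order $(k-2)d$; this is fine in the base cases (for $k=3$ the remainder has only $d$ vertices, and for $k=4$ it is a $2d$-set, where the remark before Lemma~\ref{le3} gives the bound $3=k-1$), but for $k\ge 5$ none of the available tools delivers it, and your proposed vertex transfer cannot close the gap as described: Theorem~\ref{vthe1} applied to the graph $H_2$ on $(k-2)d+1$ vertices gives only $\chi_G(H_2)+\chi_G(\overline{H_2})\le \lceil((k-2)d+1)/d\rceil+1=k$, and re-attaching the remaining $2d-1$ vertices of $S$ costs $+1$ on each side, landing you back at $k+2$. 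For comparison, the paper proves (II) by induction on $k$ with bases $k=3$ and $k=4$ (the latter via the counting fact that a $K_{d+1}$ in $H[V(H)\setminus S]$ and a $K_{d+1}$ in $\overline{H}[V(H)\setminus S]$ could share at most one vertex and so would need $2d+1$ vertices), and in the inductive step it simply applies the induction hypothesis for (II) to $H'=H\setminus S$ to get $\chi_G(H')+\chi_G(\overline{H'})\le k-1$. But that application presupposes that $H'$ itself contains a $2d$-set inducing $G$-free graphs on both sides, which the paper never verifies; so the obstacle you flagged is real, and the paper's own inductive step glosses over exactly the point at which your attempt stalls.
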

	\begin{proof}
		With an  argument similar to Theorem \ref{vthe1}, one can say that:
		\[ \chi_G(H)+ \chi_G(\overline{H})\leq k+2.\]
		Now, to prove (I), w.l.g we may suppose that $H$ is $G$-free critical, that is for each $v\in V(H)$, $\chi_G(H\setminus\{v\})\leq \chi_G(H) -1$. Let $v$ be a vertex of $V(H)$. Hence, by Theorem \ref{vthe1},  
		\begin{equation}
			\chi_G(H\setminus\{v\})+ \chi_G(\overline{H}\setminus\{v\})\leq \lceil\frac{n-1}{\delta}\rceil+1.
		\end{equation}
		As, $H$ is  $G$-free critical, $\delta(H)\geq (\chi_G(H) -1)\delta$, it yields $\Delta(\overline{H})= n-1-\delta(H)\leq n-1-(\chi_G(H) -1)\delta$. Hence, by Corollary \ref{col2}, 
		\[ \chi_G(\overline{H})\leq \lceil\frac{\Delta(\overline{H})+1}{\delta}\rceil\leq \lceil\frac{ n-(\chi_G(H) -1)\delta}{\delta}\rceil=\lceil\frac{ n}{\delta}\rceil-(\chi_G(H) -1).\]
		Therefore, $\chi_G(H)+\chi_G(\overline{H})\leq  \lceil\frac{ n}{\delta}\rceil+1$  and the proof of (I) is complete.
		
		We prove (II) by induction on $k$. For $k=3$, it is clear that $\chi_G(H)+\chi_G(\overline{H})\leq  k+1$. For $k=4$, as $H[S]$, $\overline{H}[S]$ are $G$-free, and $|S|=2k$, so $\chi_G(H[S])+\chi_G(\overline{H})[S]=2$. Hence, with an  argument similar to Claim 3, either $H[\overline{S}]$ is $G$-free or $\overline{H}[\overline{S}]$ is $G$-free. So, $\chi_G(H[\overline{S}])+\chi_G(\overline{H})[\overline{S}]\leq 3$ and 
		$\chi_G(H)+\chi_G(\overline{H})\leq 5=k+1$. Now, let for each $k'\leq k-1$ the statement holds, and suppose that $H$ be a graph with $kd$ vertices. W.l.g assume that $S$ with  size $2d$  be a subset of $V(H)$, where $H[S]$ and $\overline{H}[S]$ are $G$-free. Set $H'=H\setminus S$, Hence, $|H'|=(k-2)d$, and by induction,   $\chi_G(H')+\chi_G(\overline{H'})\leq k-1$. As,  $H[S]$ and $\overline{H}[S]$ are $G$-free, thus $\chi_G(H)+\chi_G(\overline{H})\leq k+1$.
		
		To prove (III), w.l.g we may suppose that  $S$ with  size $2d$  is a subset of $V(H)$, such that  $g(H[S])=2d$. As, $g(H[S])=2d$, $d\geq 3$, and $G=K_{d+1}$, so $H[S]$ is $G$-free. Since  $g(H[S])=2d$, so $\overline{H}[S]\cong K_{2d}\setminus C_{2d}$, and  $\omega(\overline{H}[S])=d$, that is $\overline{H}[S]$ is $G$-free. Therefore, by proof of (II), the proof of (III) is complete.
		
		Which implies that  the proof is complete.
	\end{proof}
Assume that   $\C$  is a family of all $2$-regular graphs.	In the next theorem, we determine an upper bound for  $\chi_G(H)+ \chi_G(\overline{H})$, for each $G\in \C$.
	
	\begin{theorem}\label{vthe3} Let $H$ and $G$ be two graphs, where $G\in  \C$, hence:
		\[ \chi_{G}(H)+ \chi_{G}(\overline{H})\leq \lceil\frac{n}{2}\rceil+1.\]
		And this bound is sharp. 
	\end{theorem}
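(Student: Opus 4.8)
The plan is to reduce the statement to the vertex-arboricity Nordhaus--Gaddum bound of Theorem~\ref{tho1}. The key observation is a monotonicity between $\chi_G$ and the arboricity $a$: since $G\in\C$ is $2$-regular, it is a disjoint union of cycles and in particular contains a cycle, so every acyclic (forest) subgraph is automatically $G$-free. Consequently, any coloring of $H$ whose color classes induce forests is already a valid $G$-free coloring, which gives $\chi_G(H)\le a(H)$, and in the same way $\chi_G(\overline{H})\le a(\overline{H})$.

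With this in hand, I would simply add the two inequalities and invoke Theorem~\ref{tho1}:
\[
\chi_G(H)+\chi_G(\overline{H})\le a(H)+a(\overline{H})\le \frac{n+3}{2}.
\]
Since the left-hand side is an integer, it is at most $\lfloor (n+3)/2\rfloor$, and a one-line check shows $\lfloor (n+3)/2\rfloor=\lceil n/2\rceil+1$ for every $n$ (separating even $n$ from odd $n$). This yields the claimed bound $\chi_G(H)+\chi_G(\overline{H})\le\lceil n/2\rceil+1$ uniformly, with no case analysis on the structure of $G$.

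For sharpness, I would exhibit $G=C_3$ (which is $2$-regular, hence $G\in\C$) together with $H=K_n$. Every color class of $K_n$ induces a complete graph, and such a clique is $C_3$-free precisely when it has at most two vertices; hence at least $\lceil n/2\rceil$ classes are needed and $\chi_{C_3}(K_n)=\lceil n/2\rceil$. On the other hand $\overline{K_n}$ is edgeless and therefore trivially $C_3$-free, so $\chi_{C_3}(\overline{K_n})=1$. Adding these gives $\chi_{C_3}(K_n)+\chi_{C_3}(\overline{K_n})=\lceil n/2\rceil+1$, matching the upper bound.

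The only place where care is needed is that the direct route through the earlier results does not immediately give $+1$: in the non-critical case Lemma~\ref{le3} only yields the weaker bound $\lceil n/2\rceil+2$. The reduction to arboricity is exactly what circumvents this obstacle, since the monotonicity $\chi_G\le a$ holds uniformly for every $2$-regular $G$ (critical or not, and whether or not $G\cong K_3$), so no separate treatment of the non-critical case is required.
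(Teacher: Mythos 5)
Your proof is correct, and it takes a genuinely different route from the paper's. Your key monotonicity is valid: any $G\in\C$ is $2$-regular, hence a disjoint union of cycles, hence contains a cycle, so every color class inducing a forest is automatically $G$-free and $\chi_G(H)\le a(H)$; the integrality step $\lfloor (n+3)/2\rfloor=\lceil n/2\rceil+1$ checks out for both parities; and your sharpness example $\chi_{C_3}(K_n)+\chi_{C_3}(\overline{K_n})=\lceil n/2\rceil+1$ works (the paper instead uses $H=K_3+3K_1$ on six vertices). The paper, by contrast, proves the theorem from scratch: it splits into the cases $G=K_3$ and $G=C_m$ with $m\ge 4$, inducts on $n$ by deleting either a pair of equal-degree vertices (with a degree-counting claim showing $\chi_G$ of $H$ and of $\overline{H}$ cannot both jump when the pair is restored) or a triple of vertices, and relies only on its internal results (Theorem~\ref{vthe1} and Corollary~\ref{col2}); it never invokes Theorem~\ref{tho1}. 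Your reduction is shorter, avoids all case analysis, and in fact covers disconnected $2$-regular graphs $G$ (disjoint unions of cycles), which the paper's two connected cases do not explicitly treat. What the paper's self-contained argument buys is non-circularity of its broader narrative: the paper advertises Theorem~\ref{one-thm1} as \emph{recovering} Theorem~\ref{tho1} in the case $\G=\C$ (since $\chi_{\C}=a$), and if Theorem~\ref{vthe3} is itself derived from Theorem~\ref{tho1}, that recovery becomes a tautology --- Mitchem's bound deduced from Mitchem's bound. As a standalone proof of the stated theorem your argument is sound, because Theorem~\ref{tho1} is an external, independently established result; it just could not serve the paper's purpose of giving an independent proof of the arboricity bound.
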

	\begin{proof}
	{\bf Case 1}: $G=K_3$. If $|V(H)\neq 2k$ for each $k\geq 3$, then the proof is complete by Theorem \ref{vthe1}. So, suppose that $|V(H)= 2k$ for some $k\geq 1$. 	We prove by induction. For $k=1,2$ it is clear that $\chi_{K_3}(H)+ \chi_{K_3}(\overline{H})\leq k+1$. Suppose that $\chi_{K_3}(H)+ \chi_{K_3}(\overline{H})\leq k'+1$, for each $k'\leq k-1$, and assume that $H$ be a graph with $2k$ vertices, where $k\geq 3$. Assume that $v,v'$ be two vertices of $H$, where $\deg_H(v)=\deg_H(v')$. Set $H'=H\setminus\{v,v'\}$, as $|V(H')|=2(k-1)$, by induction  $\chi_{K_3}(H')+ \chi_{K_3}(\overline{H'})\leq k$. If $\chi_{K_3}(H')+ \chi_{K_3}(\overline{H'})\leq k-1$, then the proof is complete. Hence, assume that $\chi_{K_3}(H')+ \chi_{K_3}(\overline{H'})= k$. If either $\chi_{K_3}(H')= \chi_{K_3}(H)$ or  $\chi_{K_3}(\overline{H'})=\chi_{K_3}(\overline{H})$, then the proof is same. Now, suppose that  $\chi_{K_3}(H)= \chi_{K_3}(H')+1$ and  $\chi_{K_3}(\overline{H})=\chi_{K_3}(\overline{H'})+1$. In other word, suppose that adding $x$ to $H'$ and $x'$ to $\overline{H'}$  increases the $\chi_{K_3}(H')$ and  $\chi_{K_3}(\overline{H'})$ by one, where $x,x'\in \{v,v'\}$. Now we have a claim as follow:

		\bigskip
		{\bf Claim 4}:  $x\neq x'$. By contrary, suppose that $x=x'$, $\chi_{K_3}(H')=t_1$ and  $\chi_{K_3}(\overline{H'})=t_2$, where $t_1+t_2=k$. As $\chi_{K_3}(H'\cup \{x\})=t_1+1$ and  $\chi_{K_3}(\overline{H'}\cup \{x\})=t_2+1$, we can say that $|N_H(x)|\geq 2t_1$ and $|N_{\overline{H}}(x)|\geq 2t_2$. Now,  $2k-1=|N_H(x)|+|N_{\overline{H}}(x)|\geq 2(t_1+t_2)$, that is $2k\leq 2k-1$, a contradiction.\\
		
		So, suppose that $x=v$ and $x=v'$.  Since $\chi_{K_3}(H'\cup \{v\})=t_1+1$ and  $\chi_{K_3}(\overline{H'}\cup \{v'\})=t_2+1$, so $|N_H(v)|\geq 2t_1$ and $|N_{\overline{H}}(v')|\geq 2t_2$. As  $\deg_H(v)=\deg_H(v')$, thus  $n-1=2k-1=|N_H(v)|+|N_{\overline{H}}(v')|\geq 2(t_1+t_2)$, that is  $2k\leq 2k-1$, a contradiction again. Hence, we can say that $\chi_{K_3}(H')+ \chi_{K_3}(\overline{H'})\leq k-1$, or $\chi_{K_3}(H')= \chi_{K_3}(H)$, or  $\chi_{K_3}(\overline{H'})=\chi_{K_3}(\overline{H})$, in any case, $\chi_{K_3}(H)+ \chi_{K_3}(\overline{H})\leq k+1$.
		
		{\bf Case 2}: $G=C_m$, $m\geq 4$. For the case that $|V(H)| =2k$, the proof is same as the Case 1. Hence assume that $|V(H)|=2k+1$ for some $k\geq 2$. We prove by induction. For $k=1,2$ it is clear that $\chi_{C_m}(H)+ \chi_{C_m}(\overline{H})\leq k+1$. Suppose that $\chi_{C_m}(H)+ \chi_{C_m}(\overline{H})\leq k'+1$ for each $k'\leq k-1$, and assume that $H$ be a graph with $2k+1$ vertices, where $k\geq 3$. Assume that $V'=\{v_1,v_2,v_3\}$ be a subsets of $V(H)$. Set $H'=H\setminus V'$, as $|V(H')|=2(k-1)$, by induction,  $\chi_{C_m}(H')+ \chi_{C_m}(\overline{H'})\leq k$. Also as $|V(G)|=m\geq $, so $H[V']$ and $\overline{H}[V']$ are $G$-free. Therefore, $\chi_{C_m}(H)+ \chi_{C_m}(\overline{H})\leq \chi_{C_m}(H')+1+ \chi_{C_m}(\overline{H'})+1\leq k+2= \lceil\frac{n}{2}\rceil+1$,  and the proof of Case 2 is complete.

		To prove the sharpness of the bound, set $H=K_3+3K_1$, that is $\overline{H}=K_3$. Hence,  $\chi_{K_3}(H)=\chi_{K_3}(\overline{H})=2$. Therefore, as $|V(H)|=6$, so $\chi_{K_3}(H)+\chi_{K_3}(\overline{H})=4=\lceil\frac{6}{2}\rceil+1$. Which implies that  the proof is complete.
		
	\end{proof}
	

	Therefore by Theorems \ref{vthe1}, \ref{vthe2}, and \ref{vthe3}, we have a result as follow:
	\begin{theorem} 
		Suppose that $\G$
		is a family of  graphs with  minimum degrees  $\delta(\G)$, where $\delta(\G)=\min\{\delta(G):~G\in\G \}$. Also, let $H$ be a  graph with $n(H)$ vertices.  Then:
		
		\[
		\chi_{\G}(H)+ \chi_{\G}(\overline{H})\leq\left\lbrace
		\begin{array}{ll}
			
			\lceil\frac{n(H)}{\delta(\G)}\rceil+1 & ~~~~
		when ~\G=\C ~or	~either,~~ H~ or~\overline{H}~is ~\G-free~ critical, ~\vspace{.2 cm}\\
			\lceil\frac{n(H)}{\delta(G)}\rceil+2 & ~~~~ ~~~~~~ otherwise.
		\end{array}
		\right.
		\]	
	\end{theorem}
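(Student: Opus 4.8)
The plan is to run the whole argument with the single-graph parameter replaced by $\delta:=\delta(\G)$, and to verify that every coloring produced along the way is genuinely $\G$-free. The one observation that legitimizes this is: each $G\in\G$ has minimum degree at least $\delta$, so a copy of any member of $\G$ inside a color class would force that class to contain a vertex of degree at least $\delta$; hence any induced subgraph of maximum degree at most $\delta-1$ is automatically $\G$-free, and in particular every $G\in\G$ has at least $\delta+1$ vertices. Feeding this into the proofs of Lemma~\ref{the2} and Corollary~\ref{col2} gives their family analogues verbatim: if $H$ is $\G$-free $k$-critical then $\delta(H)\ge\delta(k-1)$, and for every graph $H$ one has $\chi_\G(H)\le\lceil(\Delta(H)+1)/\delta\rceil$ (split $V(H)$ by Theorem~\ref{th1} into $\lceil(\Delta(H)+1)/\delta\rceil$ classes of maximum degree $\le\delta-1$, each $\G$-free). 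These two statements are the only engines I need.

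First I would settle the two ``$+1$'' situations. If $H$ (or, symmetrically, $\overline H$) is $\G$-free critical with $\chi_\G(H)=t$, then the family form of Lemma~\ref{the2} gives $\delta(H)\ge(t-1)\delta$, whence $\Delta(\overline H)=n-1-\delta(H)\le n-1-(t-1)\delta$, and the family form of Corollary~\ref{col2} yields
\[
\chi_\G(\overline H)\le\Big\lceil\tfrac{\Delta(\overline H)+1}{\delta}\Big\rceil\le\Big\lceil\tfrac{n-(t-1)\delta}{\delta}\Big\rceil=\Big\lceil\tfrac{n}{\delta}\Big\rceil-(t-1),
\]
so $\chi_\G(H)+\chi_\G(\overline H)\le\lceil n/\delta\rceil+1$. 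This is the generalization of Case~1 of Theorem~\ref{vthe1} and of part~(I) of Theorem~\ref{vthe2}, and it holds for every family. The other ``$+1$'' case, $\G=\C$, is not reachable through criticality: here $\delta=2$ and a color class is $\C$-free exactly when it induces a forest, so $\chi_\C=a$ is the vertex arboricity, and the desired bound is Theorem~\ref{tho1}, which (the quantities being integers) reads $a(H)+a(\overline H)\le\lceil n/2\rceil+1$; this is also the content of Theorem~\ref{vthe3}.

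It then remains to get the ``$+2$'' bound when $\G\neq\C$ and neither $H$ nor $\overline H$ is $\G$-free critical. Rather than re-run the clique bookkeeping of Theorems~\ref{vthe1} and~\ref{vthe2}, I would reduce to the critical case just handled. By Lemma~\ref{l1}, $H$ contains a $\G$-free $t$-critical subgraph $F$ with $t=\chi_\G(H)$ and $|V(F)|=n_1\le n-1$; write $V_1=V(H)\setminus V(F)$ and $n_2=|V_1|\ge 1$. The critical ``$+1$'' bound applied to $F$ gives $\chi_\G(F)+\chi_\G(\overline F)\le\lceil n_1/\delta\rceil+1$, and since $\overline H[V(F)]$ is a subgraph of $\overline F$ we get $\chi_\G(\overline H[V(F)])\le\lceil n_1/\delta\rceil+1-t$. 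The remaining part $\overline H[V_1]$ I would color by splitting $V_1$ into $\lceil n_2/\delta\rceil$ blocks of at most $\delta$ vertices; each such block has fewer than $\delta+1$ vertices, hence contains no member of $\G$ and is $\G$-free, giving $\chi_\G(\overline H[V_1])\le\lceil n_2/\delta\rceil$. Combining these and using $\lceil n_1/\delta\rceil+\lceil n_2/\delta\rceil\le\lceil n/\delta\rceil+1$ yields $\chi_\G(H)+\chi_\G(\overline H)\le\lceil n/\delta\rceil+2$, as required.

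The step I expect to be the main obstacle is exactly the single-graph-to-family transfer in the clique situation $K_{\delta+1}\in\G$: the constructions in Case~2 of Theorem~\ref{vthe1} and in Theorem~\ref{vthe2} are phrased through clique numbers $\omega$ and only certify the absence of $K_{\delta+1}$, not of the other members of $\G$. The reason for routing the ``$+2$'' case through the critical-subgraph reduction above, and for phrasing everything through the minimum-degree observation rather than through $\omega$, is precisely to sidestep this gap: every class I build is certified $\G$-free either by a maximum-degree-$\le\delta-1$ guarantee, by criticality via the family form of Lemma~\ref{the2}, or by the trivial ``fewer than $\delta+1$ vertices'' bound, each of which is insensitive to which particular graphs $\G$ contains.
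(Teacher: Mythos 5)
Your proposal is correct, and it takes a genuinely different --- and in fact sounder --- route than the paper. The paper first proves single-graph results (Theorems~\ref{vthe1}, \ref{vthe2}, \ref{vthe3} and Lemma~\ref{le3}), with a case split on whether $G\cong K_{\delta+1}$ and a fair amount of clique bookkeeping, and then deduces the family statement in one line from the inequality $\chi_{\G}(H)\leq\max_{G\in\G}\chi_{G}(H)$. That inequality is backwards: a $\G$-free coloring must avoid every member of $\G$ simultaneously, so $\chi_{\G}(H)\geq\chi_{G}(H)$ for each $G\in\G$, and a coloring witnessing $\chi_{G'}(H)$ need not be $\G$-free; the paper's family-to-single-graph reduction is therefore a genuine flaw. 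Your approach avoids it by working with $\delta=\delta(\G)$ throughout: the observation that a class of maximum degree at most $\delta-1$, or on at most $\delta$ vertices, is automatically $\G$-free gives family forms of Lemma~\ref{the2} and Corollary~\ref{col2} verbatim, hence the ``$+1$'' bound under criticality; and your critical-subgraph reduction with blocks of size $\delta$ (rather than the size-$(\delta+1)$ blocks of Lemma~\ref{le3}, which is exactly why the paper must assume $G\ncong K_{\delta+1}$ there) gives the ``$+2$'' bound uniformly, with no clique analysis and no separate treatment of $K_{\delta+1}\in\G$. What the paper's longer route buys is the standalone single-graph theorems and the sharpness examples; what your route buys is a proof of the stated family theorem that actually goes through.

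Two caveats. First, for $\G=\C$ you invoke Theorem~\ref{tho1} (Mitchem's arboricity bound) rather than prove it. This is logically legitimate, since it is an independently proved literature result and $\chi_{\C}=a$; but the paper advertises Theorem~\ref{one-thm1} as a generalization of Theorem~\ref{tho1}, so in the paper's intent this case was to be reproved, and note that your aside that the bound ``is also the content of Theorem~\ref{vthe3}'' repeats the paper's own single-graph/family conflation: Theorem~\ref{vthe3} treats one cycle $G\in\C$, not the family $\C$. Second, non-criticality of $H$ does not force $|V(F)|\leq n-1$: a $\G$-free critical subgraph may be spanning, differing from $H$ only in edges. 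This is harmless --- when $V(F)=V(H)$ your computation runs with $n_2=0$ and yields the even better bound $\lceil n/\delta\rceil+1$, using $\overline{H}\subseteq\overline{F}$ --- but the claim $n_2\geq 1$ should be dropped.
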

	\begin{proof}
		Suppose that $G\in \G$, clearly $\chi_{\G}(H) \leq	\max_{G\in\G}\chi_{G}(H)$. Assume that $\max_{G\in\G}\chi_{G}(H)=	\chi_{G'}(H)$, and $\delta'=\delta(G')$. Hence, $\chi_{\G}(H) \leq\chi_{G'}(H)$. Since $\delta(\G)\leq \delta'$ and by Theorems \ref{vthe1}, \ref{vthe2} and \ref{vthe3}, one can check that $\chi_{\G}(H) \leq\chi_{G'}(H)\leq 	\lceil\frac{n(H)}{\delta'}\rceil+i\leq	\lceil\frac{n(H)}{\delta(\G)}\rceil+i$, where $i\in \{1,2\}$. Which implies that  the proof is complete.

	\end{proof}
	In Theorem~\ref{one-thm1}, if we take 
	$\G=\{K_2\}$, then we get  Nordhaus-Gaddum's result in terms of chromatic number. Also, assume that   $\C$  is a family of all $2$-regular graphs. One can see for a given graph $H$,  $a(H)$ and $\chi_{\C}(H)$ are  identical. Therefore, if we take $\G=\C$,  then by Theorem \ref{one-thm1}, we get  Theorem~\ref{tho1}.

	\bibliographystyle{plain}
	\bibliography{G-free}
\end{document}